\newtheorem{theorem}{Theorem}[section] 
\newtheorem{theorem*}{Theorem} 
\newtheorem{proposition}[theorem]{Proposition} 
\newtheorem{corollary}[theorem]{Corollary}
\newtheorem{lemma}[theorem]{Lemma}
\theoremstyle{definition}
\newtheorem{observation}[theorem]{Observation}
\newtheorem{question}{Question}
\newtheorem{definition}[theorem]{Definition}
\newtheorem{example}[theorem]{Example}
\theoremstyle{remark}
\newcommand{\NN}{\mathbb{N}}
\newcommand{\CL}[1]{\left\lceil #1 \right\rceil}
\newcommand{\FL}[1]{\left\lfloor #1 \right\rfloor}
\newcommand{\au}{a_u}
\newcommand{\af}{a_f}
\newcommand{\at}{a_t}
\newcommand{\A}{\mathcal{A}}
\DeclareMathOperator{\wt}{wt}
\def\esub{\subseteq}
\def\nul{\varnothing}
\def\Gb{\overline{G}}
\def\VEC#1#2#3{#1_{#2},\ldots,#1_{#3}}
\def\FR{\frac}
\def\FL#1{\left\lfloor #1\right\rfloor}
\def\CL#1{\left\lceil #1\right\rceil}
\def\SE#1#2#3{\sum_{#1=#2}^{#3}}
\def\smvs{\sum_{v\in S}}
\def\Vxy{V_{x,y}}
\title{The Unit Acquisition Number of a Graph}
\author{Frederick Johnson\footnotemark[1], Anna Raleigh\footnotemark[1],\\ Paul S.\ Wenger\footnotemark[1],\ \ and Douglas B.\ West\footnotemark[2]}
\begin{document}

\maketitle

\begin{abstract}
Let $G$ be a graph with nonnegative integer weights.
A {\it unit acquisition move} transfers one unit of weight from a vertex to a
neighbor that has at least as much weight.
The {\it unit acquisition number} of a graph $G$, denoted $\au(G)$, is the
minimum size that the set of vertices with positive weight can be reduced to
via successive unit acquisition moves when starting from the configuration in
which every vertex has weight $1$.

For a graph $G$ with $n$ vertices and minimum degree $k$, we prove
$\au(G)\le (n-1)/k$, with equality for complete graphs and $C_5$.  Also
$\au(G)$ is at most the minimum size of a maximal matching in $G$, with
equality on an infinite family of graphs.  Furthermore, $\au(G)$ is bounded by
the maximum degree and by $\sqrt{n-1}$ when $G$ is an $n$-vertex tree with
diameter at most $4$.  We also construct arbitrarily large trees with maximum
degree $5$ having unit acquisition number $1$, obtain a linear-time algorithm
to compute the acquisition number of a caterpillar, and show that graphs with
diameter $2$ have unit acquisition number $1$ except for $C_5$ and the Petersen
graph.

{\bf Keywords: 05C22; acquisition number; unit acquisition; caterpillar}
\end{abstract}

\renewcommand{\thefootnote}{\fnsymbol{footnote}}
\footnotetext[1]{
School of Mathematical Sciences, Rochester Institute of Technology, Rochester, NY;
{\tt fwj9028@rit.edu, anr2291@rit.edu, pswsma@rit.edu}.}
\footnotetext[2]{
Departments of Mathematics, Zhejiang Normal University, China, and University of Illinois, USA ,
{\tt west@math.uiuc.edu}. Research supported by Recruitment Program of Foreign Experts, 1000 Talent Plan,
State Administration of Foreign Experts Affairs, China}
\renewcommand{\thefootnote}{\arabic{footnote}}

\baselineskip18pt

\section{Introduction}

Consider a network of cities in which troops have been deployed.  If the goal
is to airlift the troops out of the cities, then a possible protocol for moving
the troops is to first consolidate troops by having those in a city with many
troops protect a smaller number arriving from a neighboring city.  The goal is
to minimize the number of cities where an airlift is then needed to remove the
troops.  In this setting, can all of the troops move to a single city?

We model this scenario with a graph in which every vertex initially has weight
$1$.  A {\it unit acquisition move} transfers one unit of weight from a vertex
$u$ to a neighbor $v$, under the condition that before the move $v$ has at
least as much weight at $u$.  The {\it unit acquisition number} of a graph $G$,
denoted $\au(G)$, is the minimum size that the set of vertices with positive
weight can be reduced to via successive unit acquisition moves when starting
from the configuration in which every vertex has weight $1$.
Because a legal move from $u$ to $v$ can be followed by another if $u$ has
additional weight, we can equivalently transfer any integer amount of weight
when a transfer is allowed.

Acquisition numbers were introduced by Lampert and Slater~\cite{LS}.  They
referred to unit acquisition moves (and the more general integer transfers)
as {\it consolidations} and called a consolidation that transfers all the weight
from a vertex an {\it acquisition move}.  To unify the terminology across
several models, we call such a move a {\it total acquisition move}; thus
``unit'' indicates that not all of the weight need move.  Another model allows
consolidations that move fractional (non-integer) amounts of weight.  In each
case the initial distribution has weight $1$ at each vertex, and the aim is to
make the set retaining positive weight as small as possible using the specified
type of consolidation.  In addition to the unit acquisition number $\au(G)$,
the corresponding parameters are the {\it total acquisition number} $\at(G)$
and the {\it fractional acquisition number} $\af(G)$.

Up to now, the study of acquisition has focused on total acquisition and
fractional acquisition.  Lampert and Slater~\cite{LS} proved
$a_t(G)\le \FL{(n+1)/3}$ and observed that no vertex $v$ can reach weight
greater than $2^{d(v)}$, where $d(v)$ is the degree of $v$.
Slater and Wang~\cite{SW} later proved that deciding whether a graph has total
acquisition number $1$ is NP-complete, and they gave a linear-time algorithm to
determine $\at(G)$ when $G$ is a caterpillar.  Among other results,
LeSaulnier et al.~\cite{LPSWWW} characterized the trees $G$ such that
$\at(G)=1$, obtained sharp bounds on $\at(G)$ in terms of the diameter
and number of vertices when $G$ is a tree, gave bounds on $\at(G)$ when $G$ has
diameter $2$, and proved $\min\{\at(G),\at(\Gb)\}=1$, where $\Gb$ is the
complement of $G$.  LeSaulnier and West~\cite{LW} characterized the $n$-vertex
trees having the largest total acquisition number.  MacDonald, Wenger, and
Wright~\cite{MWW} determined the total acquisition numbers of most grids.
Bal et al.~\cite{BBDP} studied the total acquisition number of random graphs,
determining the threshold function for $\at(G(n,p))=1$ in the usual binomial
random graph model with independent edge probability $p$. 

Note that unit and then fractional acquisition provide more flexibility in
choosing moves than total acquisition does; thus $\af(G)\le \au(G)\le \at(G)$.
Wenger~\cite{W} determined the fractional acquisition number of every graph:
$\af(G)=1$ if $G$ is connected and $G$ is not a path or a cycle, but
$\af(P_n)=\af(C_n)=\CL{\frac n4}$ where $P_n$ and $C_n$ are the $n$-vertex path
and cycle, respectively.

We begin the study of unit acquisition number.
In Section~\ref{Sec:init}, we obtain general bounds.  Let $\delta(G)$ and
$\Delta(G)$ denote the minimum and maximum vertex degrees in $G$, respectively.
For a graph $G$ with $n$ vertices and minimum degree $k$,
we prove $\au(G)\le (n-1)/\delta(G)$ when $G$ has $n$ vertices; equality
holds if and only if $G\in\{K_n,C_5\}$, where $K_n$ is the $n$-vertex complete
graph.  Also $\au(G)$ is at most the minimum size of a maximal matching in $G$,
and this bound is sharp on infinitely many graphs with maximum degree $k$ when $k\ge4$.
Also $\au(G)\le\Delta(G)$ and $\au(G)\le\sqrt{n-1}$ when $G$ is an $n$-vertex
tree with diameter at most $4$.

In Section~\ref{Sec:unbounded}, we construct arbitrarily large trees with
maximum degree $5$ having unit acquisition number $1$.  This is very different
from total acquisition, where each vertex $v$ can receive weight at most
$2^{d(v)}$, and hence $\at(G)\ge n/32$ when $G$ is a tree with maximum degree
at most $5$. 

A {\it caterpillar} is a tree in which all non-leaf vertices lie on one path.
In Section~\ref{Sec:trees}, we characterize the caterpillars $G$ such that
$\au(G)=1$.  This leads to a linear-time algorithm to determine
$\au(G)$ when $G$ is a caterpillar; Slater and Wang~\cite{SW} found such an
algorithm for $\at(G)$ on caterpillars.

In Section~\ref{Sec:diam2}, we prove $\au(G)\le 2$ for every graph $G$ with
diameter $2$, with equality only for $C_5$ and possibly the Peterson graph.
The question from~\cite{LPSWWW} whether $\at(G)\le2$ whenever $G$ has 
diameter $2$ remains open.

We end this introduction with several open questions about unit acquisition.
Recall that deciding $\at(G)=1$ is NP-complete, while ``$\af(G)=1$?'' can be
answered in time linear in $|V(G)|$ by the results in~\cite{W}.

\begin{question}
What is the complexity of deciding whether $\au(G)=1$?
\end{question}

Although we have determined which caterpillars have unit acquisition number $1$,
the question seems more difficult for general trees.

\begin{question}
Is there a simple characterization of the trees with unit acquisition number
$1$?
\end{question}

Our construction of arbitrarily large trees with maximum degree $5$ having
unit acquisiton number $1$ suggests an obvious question.

\begin{question}
Is there a bound on the number of vertices in a graph $G$ with maximum degree
$4$ such that $\au(G)=1$?
\end{question}

We suspect that the answer to this question is yes, and that the maximum number
of vertices is somewhere around $250$.

\section{Basic results}\label{Sec:init}

We maintain the initial restriction that unit acquisition moves transfer one
unit of weight.  The additional flexibility of moving more weight in one move
does not change any results, and the restriction simplifies the analysis in
many places.  We thus view the $n$ units of weight on an $n$-vertex graph as
{\it chips}; the initial unit of weight at a vertex $v$ is the {\it chip} for
$v$, denoted $c_v$.  Finally, we call a sequence of unit acquisition moves
a {\it protocol}, and a protocol on a graph $G$ is {\it optimal} if it leaves
only $\au(G)$ vertices with positive weight.

\begin{proposition}
Every protocol is finite.
\end{proposition}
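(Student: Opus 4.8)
The plan is to exhibit a strictly monotone, integer-valued, bounded potential on weight configurations. Since every move only redistributes weight, the total weight stays equal to $n$ throughout, so the set of reachable configurations is finite; the only way a protocol could fail to terminate is by cycling through configurations. A potential that strictly increases with every move immediately rules this out and in fact gives an explicit bound on the number of moves.

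The potential I would use is the sum of squares of the weights. For a configuration assigning weight $\wt(v)$ to each vertex $v$, set $\Phi=\sum_{v\in V(G)}\wt(v)^2$. Consider a single unit acquisition move that transfers one unit of weight from $u$ to a neighbor $v$ with $\wt(v)\ge\wt(u)$ before the move; write $a=\wt(u)$ and $b=\wt(v)$, so $a\le b$ and $a\ge 1$ (a legal move requires $u$ to have positive weight). After the move only the two terms for $u$ and $v$ change, from $a^2+b^2$ to $(a-1)^2+(b+1)^2$, so
\[
\Delta\Phi=(a-1)^2+(b+1)^2-a^2-b^2=2(b-a)+2\ge 2 .
\]
Thus $\Phi$ strictly increases, by at least $2$, with every unit acquisition move.

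Finally, $\Phi$ is bounded above: at every stage the weights are nonnegative integers summing to $n$, so $\Phi=\sum_{v}\wt(v)^2\le\left(\sum_{v}\wt(v)\right)^2=n^2$. Since $\Phi$ starts at $n$ (each of the $n$ vertices has weight $1$), increases by at least $2$ per move, and never exceeds $n^2$, any protocol consists of at most $(n^2-n)/2$ moves and hence is finite. There is no real obstacle here; the only point that needs care is noticing that the legality condition $\wt(v)\ge\wt(u)$ is a \emph{weak} inequality, which is exactly what forces $\Delta\Phi\ge 2>0$ rather than merely $\Delta\Phi\ge 0$.
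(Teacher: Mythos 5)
Your proof is correct and uses exactly the same argument as the paper: the sum of squares of the weights starts at $n$, increases by at least $2$ with each move, and cannot exceed $n^2$. You simply spell out the computation and the resulting bound of $(n^2-n)/2$ moves in more detail.
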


\begin{proof}
On an $n$-vertex graph $G$, the sum of the squares of the vertex weights is
initially $n$, cannot exceed $n^2$, and increases by at least $2$ with every
unit acquisition move.
\end{proof}

\begin{proposition}\label{partpath}
$\au(P_n)=\au(C_n)=\CL{n/4}$.
\end{proposition}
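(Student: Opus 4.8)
The plan is to sandwich both parameters between $\CL{n/4}$ and $\CL{n/4}$. For the lower bound I would simply combine two facts already recorded in the introduction: the inequality $\af(G)\le\au(G)$, and Wenger's evaluation $\af(P_n)=\af(C_n)=\CL{n/4}$ from~\cite{W}. These give $\au(P_n)\ge\CL{n/4}$ and $\au(C_n)\ge\CL{n/4}$ at once, so no new work is needed on this side.

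For the upper bound on the path I would give an explicit protocol built from a four-vertex gadget. On four consecutive vertices $v_1v_2v_3v_4$, each of weight $1$: move the chip at $v_1$ to $v_2$, then the chip at $v_4$ to $v_3$ (both legal, since in each case the target currently has equal weight), and finally move the two units now at $v_2$ onto $v_3$ (legal, since $v_3$ also holds $2$); this leaves all four chips on $v_3$ and the other three vertices empty. Now write $n=4q+r$ with $0\le r\le3$ and partition $V(P_n)$ into $q$ consecutive blocks of four vertices followed by one block of $r$ vertices when $r\ge1$ --- exactly $\CL{n/4}$ blocks in total. Applying the gadget to every block of size $4$, and the obvious analogous moves to a leftover block of size $2$ or $3$ (a block of size $1$ needs nothing), consolidates each block onto a single vertex. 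Since the moves inside disjoint consecutive blocks do not interfere, the concatenated protocol is legal and leaves $\CL{n/4}$ positive vertices, so $\au(P_n)\le\CL{n/4}$.

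For the cycle I would argue by monotonicity under edge addition: $P_n$ is a spanning subgraph of $C_n$, and any unit acquisition move legal on a spanning subgraph is also legal on the larger graph, so a protocol for $P_n$ transfers verbatim to $C_n$; hence $\au(C_n)\le\au(P_n)$. Putting the pieces together yields $\CL{n/4}\le\au(C_n)\le\au(P_n)\le\CL{n/4}$, forcing both to equal $\CL{n/4}$.

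I do not expect a genuine obstacle here: all the real difficulty is hidden inside Wenger's lower bound for $\af$, which we are taking as given. The only points needing a little care are checking that the block decomposition produces precisely $\CL{n/4}$ blocks for each residue $r\in\{0,1,2,3\}$, confirming the small cases $n\le4$, and noting explicitly that $\af(P_n)=\af(C_n)$ supplies the lower bound for \emph{both} graphs.
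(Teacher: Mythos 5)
Your proof is correct, but it only half-coincides with the paper's. The paper's entire proof is a citation sandwich: it quotes $\at(P_n)=\at(C_n)=\CL{n/4}$ from the prior literature and $\af(P_n)=\af(C_n)=\CL{n/4}$ from Wenger, then invokes $\af(G)\le\au(G)\le\at(G)$, so both bounds come for free. You use the same lower bound (via $\af$), but you replace the upper bound by a self-contained construction: the four-vertex block gadget on $P_n$ (which is legal exactly as you checked, since each transfer goes to a vertex of at least equal weight), the count of $\CL{n/4}$ blocks over the residues $r\in\{0,1,2,3\}$, and the monotonicity observation that a protocol on the spanning subgraph $P_n$ runs verbatim on $C_n$, giving $\au(C_n)\le\au(P_n)$. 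What your route buys is independence from the known value of $\at(P_n)$ and $\at(C_n)$: anyone reading your argument needs only Wenger's fractional lower bound, not the total-acquisition results of Lampert--Slater/LeSaulnier et al., and the explicit protocol is more instructive about how unit moves actually consolidate weight. What the paper's route buys is brevity (two sentences) and symmetry, since the same two citations settle paths and cycles simultaneously without needing the subgraph-monotonicity step. Both arguments are sound; just note that your blockwise protocol is really an $\at$-style protocol in disguise (each block's weight moves totally), which is why the paper could get the same bound by citing $\at$ directly.
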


\begin{proof}
From~\cite{LPSWWW} and~\cite{W}, $\at(C_n)=\at(P_n)=\CL{n/4}$ and
$\af(C_n)=\af(P_n)=\CL{n/4}$.  In the introduction we observed
$\af(G)\le \au(G)\le \at(G)$ for every graph $G$.
\end{proof}

\begin{example}
In studying total acquisition or fractional acquisition numbers, it suffices
to consider acyclic graphs.  In total acquisition, weight can leave a vertex
at most once, after which no weight can move to it; hence the set of edges used
in a total acquisition protocol is acyclic.  In fractional acquisition, any
connected graph having maximum degree at least $3$ has a spanning tree with
maximum degree at least $3$, which suffices for fractional acquisition number
$1$, while $a_f(G)=\CL{|V(G)|/4}$ when $G\in\{P_n,C_n\}$~\cite{W}.  However,
optimal protocols for unit acquisition may need edge sets with cycles.  For
example, if $G$ is the graph in Figure~\ref{fig:cycle}, then $\au(G)=1$, but
$\au(G-e)=2$ when $e$ is any edge in $G$.
\end{example}

\begin{figure}[hbt]
\centering
\begin{tikzpicture}
\fill (0,0) circle (0.15);
\fill (1,0) circle (0.15);
\fill (2,0) circle (0.15);
\fill (3,0) circle (0.15);
\fill (4,0) circle (0.15);
\fill (1.5,1) circle (0.15);
\fill (1.5,2) circle (0.15);

\draw[line width=2] (0,0) -- (4,0);
\draw[line width=2] (1,0) -- (1.5,1) -- (1.5,2);
\draw[line width=2] (2,0) -- (1.5,1);
\end{tikzpicture}
\caption{A minimal graph with unit acquisition number $1$.\label{fig:cycle}}
\end{figure}
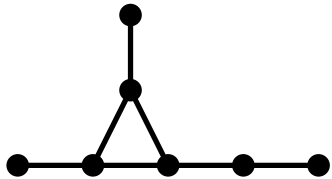

In a rooted tree, the {\it parent} of a non-root vertex $v$ is its neighbor
along the path from $v$ to the root.  An \textit{ascending tree} is a rooted
weighted tree such that the weight of every leaf is at most the weight of its
parent, and the weight of every non-root non-leaf vertex is strictly less than
the weight of its parent.

\begin{observation}
If a unit aquisition protocol on a tree $T$ can turn it into 
an ascending tree, then $\au(T) = 1$.
\end{observation}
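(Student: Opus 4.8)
The plan is to prove the following strengthening: from \emph{any} ascending configuration on a weighted tree $T$ one can reach, via unit acquisition moves, a configuration in which a single vertex carries all the weight. Applying this to the ascending configuration produced by the hypothesized protocol gives $\au(T)=1$. I would argue by induction on $|V(T)|$, the case $|V(T)|=1$ being vacuous. Equivalently, the argument describes an explicit protocol: repeatedly pick a leaf, move its weight up to the root, and discard it.

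For the inductive step, fix a root $r$ witnessing that the configuration is ascending and choose any leaf $\ell$; since $|V(T)|\ge 2$, we have $\ell\ne r$. Let $\ell=u_j,u_{j-1},\dots,u_0=r$ be the path from $\ell$ to the root. The ascending hypothesis gives $\wt(u_j)\le \wt(u_{j-1})$ (the leaf condition) and $\wt(u_i)<\wt(u_{i-1})$ for $1\le i\le j-1$ (each such $u_i$ being a non-root non-leaf vertex). The key maneuver is to \emph{pump} the weight of $\ell$ up to $r$ one unit at a time: to relay a single unit, send it $u_j\to u_{j-1}\to\cdots\to u_0$. I would check that each relay is legal. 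The first step $u_j\to u_{j-1}$ needs only $\wt(u_{j-1})\ge \wt(u_j)$, which persists as $\wt(u_j)$ decreases. Once a unit reaches some $u_i$ from below, $u_i$ carries its original weight plus $1$, and this is at most $\wt(u_{i-1})$ because the strict inequality $\wt(u_i)<\wt(u_{i-1})$ leaves a gap of at least $1$; so the relay $u_i\to u_{i-1}$ is legal. Note also that during one pump only $\wt(r)$ changes on net (the intermediate weights rise by $1$ and fall back), so $\wt(r)$ is nondecreasing throughout, and the relays that terminate at $r$ stay legal on every subsequent unit. After the pump, $\ell$ has weight $0$, $\wt(r)$ has increased by the original $\wt(\ell)$, and all other weights are unchanged.

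Next I would discard $\ell$ and observe that the resulting configuration on $T-\ell$, still rooted at $r$, is ascending. The only vertex whose leaf/non-leaf status can change is $\ell$'s parent $p$; if $p$ becomes a leaf then the strict inequality it previously satisfied against its own parent (if $p\ne r$) becomes the required weak one, and if $p=r$ there is nothing to check. Raising $\wt(r)$ only relaxes the conditions on the remaining children of $r$, and all other conditions are unaffected. The induction hypothesis then consolidates the weight on $T-\ell$ onto a single vertex, and since $\ell$ now has weight $0$ this consolidates $T$ as well.

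The step I expect to be the main obstacle — indeed essentially the only thing that needs care — is verifying legality of the pump: that pushing a unit through the entire ancestor chain never violates the rule that the receiver has at least as much weight as the sender. The subtle point is that merging $\ell$ directly into $p$ would also be legal but could destroy the ascending structure at $p$'s parent (by making $\wt(p)$ too large); routing all the way up to the \emph{unconstrained} root is exactly what keeps the invariant intact. One should also check the degenerate cases $j=1$ (where $\ell$ is a child of the root, so the pump is a single sequence of moves $\ell\to r$) and $|V(T)|=2$, but these are immediate.
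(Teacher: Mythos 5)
Your proof is correct and follows essentially the same route as the paper: the paper's (terse) argument is exactly to move the weight of a leaf up to the root one unit at a time and repeat, and your pump-relay legality check plus leaf-deletion induction is just a careful write-up of that. No gaps; the relay legality and the preservation of the ascending property after removing the emptied leaf are exactly the points that needed verification.
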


\begin{proof}
The weight on a leaf in $T$ can be moved to the root one unit at a time.
Repeating this moves all weight in the tree to the root.
\end{proof}

Let $N_H(v)$ denote the set of neighbors of a vertex $v$ in a graph $H$. 

\begin{lemma}\label{level2}
If a graph $G$ has a spanning tree with every vertex having distance at most
$2$ from the root, and some edge joins vertices at distance $2$ from the root,
then $\au(G)\le2$.
\end{lemma}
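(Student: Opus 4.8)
Root the guaranteed spanning tree $T$ at $r$, let $v_1,\dots,v_k$ be the children of $r$, and for each $i$ let $C_i$ be the set of children of $v_i$, so that every vertex lies in $\{r\}$, in $\{v_1,\dots,v_k\}$, or in some $C_i$; write $T_i=\{v_i\}\cup C_i$ and $m_i=|C_i|$. Let $xy$ be the promised edge, with $x\in C_a$ and $y\in C_b$ (possibly $a=b$), and assume without loss of generality that $m_a\le m_b$. I will think of the weights as chips, as set up above.

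The engine of the argument is a pumping step: \emph{once $r$ has weight at least $2$, any $T_i$ all of whose vertices still carry their own chip can be emptied onto $r$ with nothing stranded.} Repeatedly take a grandchild $u\in C_i$ of current weight $1$, move $c_u$ to $v_i$ (legal, since $v_i$ then has weight at least $1$), and immediately move one unit from $v_i$ back to $r$ (legal, since $\wt(r)\ge 2=\wt(v_i)$ at that moment); after every grandchild has been handled, $v_i$ has weight $1$, and its last unit moves to $r$. Each grandchild's chip departs before $v_i$ is emptied, so nobody is stranded. Thus the whole problem reduces to driving $\wt(r)$ up to $2$ while permanently stranding at most one vertex — that vertex, if it exists, being the second vertex with positive weight at the end, and if none is stranded then $\au(G)=1$.

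First dispose of the easy configurations, neither of which needs the special edge. If some $v_i$ is a leaf of $T$, then moving $c_{v_i}$ to $r$ is legal, strands nobody, and brings $\wt(r)$ to $2$, so pumping every $T_j$ gives $\au(G)=1$. If every $v_i$ has a child but some $v_i$ has exactly one child $u$, then moving $c_{v_i}$ to $r$ strands only $u$; pumping the remaining subtrees (treating $x$ and $y$ as ordinary grandchildren) then leaves positive weight only at $r$ and at $u$, so $\au(G)\le 2$.

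The remaining case — every child of $r$ has at least two children — is where $xy$ is unavoidable, and is the step I expect to be the main obstacle. To seed $r$ one must empty some $v_i$, which strands all of $C_i$; the use of $xy$ is that, on emptying $v_a$, one may first relay $c_x$ to $y$ (legal, both weight $1$), so that $x$ is absorbed into $y$ and only $C_a\setminus\{x\}$ is stranded. I would empty $v_a$, perform the relay, pump each $T_j$ with $j\notin\{a,b\}$ onto $r$, and handle $T_b$ last: by then $y$ holds two chips, so $v_b$ first absorbs an ordinary grandchild, then absorbs $y$, then collects the rest of $T_b$. We then have the two allowed sinks available — $r$ together with $v_b$, or, once $\wt(r)$ has grown enough to swallow the weight gathered on $v_b$, just $r$ together with the single stranded grandchild of $v_a$. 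The part that needs care is precisely this final accounting: keeping the stranded portion of $C_a$ down to one vertex, and verifying that $\wt(r)$ is large enough at the moment it must be, using the structure the hypothesis actually imposes rather than the lone edge $xy$ on its own.
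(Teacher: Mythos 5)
Your setup (pumping into a weight-$2$ root, and the two easy cases) is fine, but the main case has a genuine gap, and it is exactly the step you flag as unresolved. After you relay $c_x$ to $y$ and then move $c_{v_a}$ to $r$, the vertex $v_a$ has weight $0$ and can never receive weight again (a recipient must have at least the donor's weight), so \emph{every} vertex of $C_a\setminus\{x\}$ whose only neighbor in $G$ is $v_a$ is permanently stuck at weight $1$. Nothing in your hypothesis limits $|C_a|$: with $a\ne b$, $C_a=\{x,u_1,u_2\}$ and $u_1,u_2$ attached only to $v_a$, your protocol ends with positive weight on $r$, on $u_1$ and $u_2$, and possibly on $v_b$'s pile as well, so it does not prove $\au(G)\le 2$. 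Moreover the hole cannot be patched within your ordering: before $v_a$ is emptied, the chips of $C_a\setminus\{x\}$ can only be relayed through $v_a$, and once $v_a$ holds weight $2$ it has no neighbor of weight at least $2$ to pass to (when $a\ne b$), so those chips have nowhere to go; and your fallback pair of sinks ($r$ with $v_b$, or $r$ with ``the single stranded grandchild'') ignores both the possibly many stranded grandchildren and the fact that $\wt(r)$ need never catch up to $\wt(v_b)$ (e.g.\ $k=2$ with $m_b$ large).

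The missing idea, which is the paper's proof, is to sacrifice the subtree containing the \emph{enriched} endpoint rather than the donor's subtree. Move $c_x$ to $y$ so that $y$ has weight $2$; then, for each other child $u$ of $v_b$, relay $c_u$ to $v_b$ (both have weight $1$) and then one unit from $v_b$ to $y$ (legal since $\wt(y)\ge 2=\wt(v_b)$); only after $C_b$ is completely absorbed into $y$ do you move $c_{v_b}$ to $r$. Now nothing is stranded: every vertex other than $y$ that still has weight lies in an ascending tree rooted at $r$ (root weight $2$, children weight $1$, grandchildren weight $0$ or $1$), so all of it flows to $r$, leaving weight only on $r$ and $y$. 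This single protocol also makes your preliminary case analysis (leaf child, single-child case) unnecessary.
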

\begin{proof}
Let $v$ be the root, with children $\VEC x1k$.  We may assume that a child $z$
of $x_1$ has a neighbor $w$ with distance $2$ from $v$ in the tree.
First move $c_w$ to $z$.  Now in two steps $z$ can acquire the weight from
each other child of $x_1$.  After $z$ acquires all these chips, move $c_{x_1}$
from $x_1$ to $v$.  Now all weight not on $z$ lies in an increasing tree
with root $v$.
\end{proof}

\begin{proposition}
If $G$ is an $n$-vertex graph, then $\au(G)\leq({n-1})/\delta(G)$, with
equality if and only if $G\in\{K_n,C_5\}$.
\end{proposition}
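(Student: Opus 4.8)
Write $k=\delta(G)$; if $k=0$ the bound is vacuous, so assume $k\ge1$. Since $\au$ is additive over the components of a graph and each component has minimum degree at least $k$, a graph with $c$ components has $\au(G)\le(n-c)/k<(n-1)/k$ once $c\ge2$; thus it suffices to prove the bound for connected $G$, and equality forces $G$ connected. The workhorse is the inequality $\au(G)\le\au(G[S])+\au(G[V(G)\setminus S])$, valid for every $S\subseteq V(G)$ because protocols restricted to $S$ and to its complement use only edges of $G$ and can be concatenated. Hence it is enough to partition $V(G)$ into $t\le(n-1)/k$ parts, each inducing a subgraph with unit acquisition number $1$.

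Two moves build such parts. The \emph{snowball}: for any $v$, the set $N[v]$ collapses onto $v$ by moving the chips of $N(v)$ to $v$ one at a time (legal since $v$ always has weight $\ge1$), so $\au(G[N[v]])=1$ and $|N[v]|=d(v)+1\ge k+1$. The \emph{twin-pile merge}: build two equal piles on adjacent vertices and merge one into the other; this is what gives $\au(K_{k,k})=1$, and, applied in blocks of four, $\au(P_n)=\au(C_n)=\CL{n/4}$. The plan for connected $G$ is induction on $n$: if $G$ is a path or a cycle, invoke Proposition~\ref{partpath}; otherwise take $v$ with $d(v)=k$, produce a set $S\supseteq N[v]$ with $\au(G[S])=1$, $|S|\ge k$, and $\delta(G-S)\ge k$ (or $G-S=\nul$), and conclude $\au(G)\le1+\au(G-S)\le1+(n-|S|-1)/k\le(n-1)/k$.

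The obstacle is producing $S$. Enlarging $S$ from $N[v]$ by repeatedly absorbing any vertex whose degree in the current complement has dropped below $k$ keeps $G[S]$ connected — each new vertex has a neighbour already inside — and terminates with $\delta(G-S)\ge k$ or $G-S=\nul$; what is not free is arranging $\au(G[S])=1$, since connected subgraphs like $P_5$ and $C_5$ are not collapsible. The crux is to show that for connected $G$ other than a path, a cycle, or $K_n$, the enlargement can be steered to remain collapsible — snowballing around a dominating core of $S$ and splicing in the extra vertices with twin-pile merges — and in fact can be done with $|S|\ge k+1$, making the inequality strict. I expect this to be the genuinely laborious step.

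For the equality direction, all the inequalities must be tight simultaneously. Among paths and cycles, Proposition~\ref{partpath} gives equality exactly for $P_2=K_2$, $C_3=K_3$, and $C_5$ (as $\CL{n/4}<n-1$ for $P_n$ with $n\ge3$, and $\CL{n/4}<(n-1)/2$ for $C_n$ with $n\notin\{3,5\}$); disconnected graphs lie strictly below. For any other connected $G$ the recursion is strict unless it halts immediately, forcing $G=K_{k+1}=K_n$. Finally $\au(K_n)=1=(n-1)/(n-1)$ and, by Proposition~\ref{partpath}, $\au(C_5)=\CL{5/4}=2=(5-1)/2$, so equality holds precisely for $G\in\{K_n,C_5\}$. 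The hardest point throughout is the constructive one: turning a closed neighbourhood into a collapsible set of size $\ge k+1$ whose deletion preserves minimum degree, except for $K_n$ and $C_5$.
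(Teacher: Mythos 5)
Your write-up is a plan, not a proof: the step you yourself flag as ``the genuinely laborious step'' is exactly the mathematical content of the proposition, and it is missing. You need, for every connected $G$ other than a path, cycle, or $K_n$, a set $S\supseteq N[v]$ (with $d(v)=\delta(G)=k$) such that simultaneously $\au(G[S])=1$, $|S|\ge k+1$, and $\delta(G-S)\ge k$. Nothing in the proposal establishes this. The absorption process you describe (repeatedly swallowing vertices whose residual degree drops below $k$) can cascade and force into $S$ induced pieces such as long pendant paths or $C_5$-like fragments, and, as you note, connected does not imply collapsible; ``snowballing around a dominating core and splicing in the extra vertices with twin-pile merges'' is a hope, not an argument, and there is no reason offered why collapsibility of the growing $S$ and the degree condition on $G-S$ can be maintained at the same time. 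The equality characterization is likewise unsupported, since it rests entirely on the unproved strictness claim $|S|\ge k+1$ for all $G\notin\{K_n,C_5,\text{paths},\text{cycles}\}$. (The correct ingredients you do have: subadditivity of $\au$ over vertex partitions, the reduction to connected graphs, and the evaluation of paths, cycles, $K_n$, $C_5$ via Proposition~\ref{partpath}.)

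For comparison, the paper's proof needs no induction and no control of the minimum degree of what remains after deletion. It takes a maximal set $\{v_1,\dots,v_m\}$ of vertices at pairwise distance at least $3$, partitions $V(G)$ into radius-$2$ balls around them, and inside each ball chooses a spanning tree $T_i$ of depth at most $2$ rooted at $v_i$ with $d_{T_i}(v_i)=d_G(v_i)$. The key trick is local: move the chip of a minimum-degree child $x_i$ of $v_i$ up to the root; the tree minus the other children of $x_i$ is then ascending, so $\au(T_i)\le d_{T_i}(x_i)\le (|V(T_i)|-1)/d_G(v_i)\le(|V(T_i)|-1)/\delta(G)$ by pigeonhole, and summing over $i$ gives the bound outright. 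Equality then forces $m=1$ (diameter $2$), degree regularity in the ball, and via Lemma~\ref{level2} the bound $\au(G)\le 2$, from which $K_n$ and $C_5$ are extracted. If you want to salvage your inductive route, the statement you must actually prove is the existence lemma for $S$ above; until then the argument has a genuine gap.
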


\begin{proof}
Clearly $\au(K_n)=1$, and $\au(C_5)=2$ is easy to check, so these achieve
equality.

For general $G$, let $S$ be a largest set of vertices such that the distance
between any two is at least $3$.  Let $S=\{v_1,\ldots,v_m\}$.
Any vertex not in $S$ has distance at most $2$ from $S$.  Partition $V(G)$ into
sets $V_1,\ldots,V_m$ with $v_i\in V_i$ by assigning each vertex not in $S$ to
a nearest vertex in $S$.  For each $i$, the set $V_i$ is the vertex set of a
tree $T_i$ contained in $G$ such that $d_{T_i}(v_i)=d_G(v_i)$ and each vertex
of $T_i$ has distance at most $2$ from $v_i$.

Let $x_i$ be a vertex of $N_{T_i}(v_i)$ with least degree in $T_i$, and let
$X_i=N_{T_i}(x_i)-\{v_i\}$.  Move $c_{x_i}$ from $x_i$ to $v_i$, making
$T_i-X_i$ an ascending tree in $G$ rooted at $v_i$.  All weight on $T_i-X_i$
can now move to $v_i$, so $\au(T_i)\le d_{T_i}(x_i)$.  By the choice of $S$, we
have $N_G(v_i)\esub V_i$.  Hence
\begin{equation}\label{eq1}
a_u(T_i)\le
d_{T_i}(x_i)\le\FR{|V(T_i)|-1}{d_{G}(v_i)}\le\FR{|V(T_i)|-1}{\delta(G)}.
\end{equation}
Therefore,
\begin{equation}\label{eq2}
\au(G)\le\sum_{i=1}^m \au(T_i)\le\sum_{i=1}^m
\FR{|V(T_i)|-1}{\delta(G)}\le\FR{n-1}{\delta(G)}.
\end{equation}


Now suppose $\au(G)=\FR{n-1}{\delta(G)}$; note that $\delta(G)>1$ is needed
unless $G=K_2$.  Equality always holds for $K_n$, so assume $G\ne K_n$.
Equality at the end of (\ref{eq2}) requires $m=1$, which by the definition of
$S$ requires diameter $2$.  Equality at the end of (\ref{eq1}) requires
$d_G(v_1)=\delta(G)$.  Equality in the middle of (\ref{eq1}) requires that all
neighbors of $v_1$ have the same degree in $T_1$.  If some vertex at distance
$2$ from $v_1$ has two neighbors in $N(v_1)$, then $T_1$ can be changed to
reduce the bound.  Hence each vertex at distance $2$ from $v$ has another
neighbor at distance $2$, since $\delta(G)>1$.  Now Lemma~\ref{level2}
implies $\au(G)\le2$.

Let $k=\delta(G)$.  Equality in (2) now requires $(n-1)/k=2$.  This also equals
$d_{T_1}(x_i)$, so each neighbor of $v$ has one child in $T_1$.  Recall also
that each vertex at level $2$ has no neighbor at level $1$ other than its
parent.  Let $X=N_G(v)$.  Since $v$ has minimum degree in $G$, every vertex of
$X$ is thus adjacent to all except possibly one other vertex of $X$.  If $k>2$,
then $X$ can acquire all the weight from level $2$, followed by accumulating
all except $c_v$ at one vertex of $X$, which then acquires $c_v$ to reach
$\au(G)=1$.  Hence we may assume $k=2$, and now avoiding $\au(G)=1$ requires
$G=C_5$. 
\end{proof}

With a bit more work, Lemma~\ref{level2} can be used to show that
$\au(G)\le2$ when $G$ has diameter $2$.  We omit this proof, because in
Section~\ref{Sec:diam2} we prove the stronger result that $\au(G)=1$ when $G$
has diameter $2$ and is not $C_5$ or the Petersen graph.

Our next upper bound is sharp more often.  We begin with a lemma used to prove
equality in the bound for many graphs.  Two chips {\it meet} when they reach
the same vertex.

\begin{lemma}\label{cut}
Given vertices $u$ and $v$ in a graph $G$, let $S$ be a minimal $u,v$-cut in
$G$.  If every vertex in $S$ has degree $2$ in $G$, and $u$ and $v$ have no
neighbors in $S$, then the chips from $u$ and $v$ cannot meet
via unit acquisition moves.
\end{lemma}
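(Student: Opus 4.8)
The plan is to reduce the statement to a structural fact about how weight can traverse a degree-$2$ cut, and then to maintain, throughout every protocol, an invariant that keeps the two chips on opposite sides of $S$.

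First I would record the structure forced by the hypotheses. Let $A$ be the component of $G-S$ containing $u$ and $B$ the component containing $v$; these are distinct since $S$ separates $u$ from $v$. Because $S$ is a \emph{minimal} $u,v$-cut, for each $s\in S$ the set $S\setminus\{s\}$ is not a $u,v$-cut, so there is a $u,v$-path whose only vertex of $S$ is $s$; as $d_G(s)=2$, the two neighbors of $s$ are exactly the vertices this path uses just before and just after $s$, so one neighbor $a_s$ lies in $A$ and the other neighbor $b_s$ lies in $B$. In particular $S$ is independent, every edge incident with $S$ joins $S$ to $A$ or to $B$, and the hypothesis that $u$ and $v$ have no neighbor in $S$ gives $u\ne a_s$ and $v\ne b_s$ for every $s\in S$. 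Consequently a chip can move between $A$ and $B$ only along some length-$2$ path $a_s,s,b_s$, and in order for a chip to reach or leave a cut vertex $s$ the legality condition $\wt(\text{receiver})\ge\wt(\text{sender})$ must hold at the corresponding edge $a_s s$ or $s b_s$.

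Next I would run an induction on the number of moves in a protocol. For a configuration reached during the protocol, say a chip is \emph{in} $A$, \emph{in} $B$, or \emph{at} a vertex $s\in S$ according to where it currently sits. I would maintain an invariant asserting that $c_u$ and $c_v$ are never both in $B$, never both in $A$, and never both at a common vertex of $S$; to make the induction go through this must be strengthened to a statement coupling the locations of $c_u$ and $c_v$ with the weight profile near the cut (so that, for instance, whenever $c_u$ has advanced to a cut vertex $s$, the weights on $b_s$ and on $B$ are too small for a move $s\to b_s$ that would let $c_v$ join it, and symmetrically). Moves entirely inside $A$ or entirely inside $B$ change no chip's side, so only the crossing moves $a_s\leftrightarrow s$ and $s\leftrightarrow b_s$ need to be examined; for those one invokes the legality condition together with the facts $u\notin N(S)$ and $v\notin N(S)$, which force a chip that reaches a cut vertex to have first built up weight on $a_s$ (resp. $b_s$), to conclude that the move cannot produce a meeting. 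Granting the invariant for every reachable configuration, the conclusion that $c_u$ and $c_v$ never occupy the same vertex is immediate.

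The main obstacle is pinning down the precise strengthening of the invariant and checking it against the crossing moves: a chip can in fact legitimately occupy a cut vertex, so one cannot simply claim that $c_u$ never leaves $A$, and a naive formulation phrased only in terms of $\wt(a_s)$ and $\wt(b_s)$ is not preserved by rearrangements of weight inside $A$ or $B$. I expect the delicate case to be when $c_u$ is sitting on a cut vertex $s$ with weight accumulated there while weight is simultaneously being marshalled on the $B$-side; the argument there has to use the global bookkeeping of weight on $B$ (and the fact that $v$ is buffered from the cut by at least one vertex) to rule out exactly the configurations in which a subsequent move $s\to b_s$ would bring the two chips together.
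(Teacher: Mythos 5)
There is a genuine gap: your argument is a plan whose central object --- the strengthened invariant ``coupling the locations of $c_u$ and $c_v$ with the weight profile near the cut'' --- is never actually formulated, and you concede that checking it against the crossing moves is the unresolved obstacle. As stated, the induction cannot be carried out: any invariant phrased in terms of weights near the cut must survive arbitrary rearrangements of weight inside the two sides, which is exactly the difficulty you acknowledge without overcoming. So the proposal stops short of a proof precisely at the point where the proof is needed.

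The missing idea, which makes all of this bookkeeping unnecessary, is that a vertex whose weight reaches $0$ can never regain weight (a move into it would require it to already have weight at least $1$). Now consider, for a cut vertex $x\in S$ (degree $2$, one neighbor on each side by minimality, no edges inside $S$), the \emph{first} move of the protocol along an edge incident to $x$. Either it moves the chip off $x$, leaving $x$ at weight $0$ forever, so nothing ever crosses $x$; or it moves a chip into $x$ from a neighbor $y$, which is legal only if $y$ had weight exactly $1$, so after the move $y$ is at weight $0$ forever and that edge is never used again --- and since $u,v\notin N(x)$ and $y$'s sole chip may be taken to be $c_y$ (chips are indistinguishable, so one may assume a vertex's own chip is the last to leave it), the chip that crossed is neither $c_u$ nor $c_v$. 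Deleting, for each $x\in S$, this first-used edge (or $x$ itself if no edge at $x$ is ever used) separates $u$ from $v$, and the analysis above shows neither $c_u$ nor $c_v$ ever traverses a deleted edge; hence the two chips stay in distinct components and cannot meet. Note that this also resolves your ``delicate case'': $c_u$ can only reach a cut vertex $x$ after the edge from $x$ to its $B$-side neighbor has already been used and sealed, so it can never continue across. Replacing the move-by-move invariant with this one-shot ``first move at each cut vertex'' observation turns your outline into a complete argument, and it is in fact the route the paper takes.
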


\begin{proof}
Because $S$ is a minimal $u,v$-cut and every vertex of $S$ has degree $2$, no
edges are induced by $S$.  If a protocol moves a chip along an edge with
endpoint $x\in S$, then the first move along an edge incident to $x$ reduces
the weight of $x$ to $0$ or brings $x$ the chip from a neighbor (which then has
weight $0$).  Since $u,v\notin N(x)$, the chip transferred is not $c_u$ or
$c_v$.  Also, once an endpoint of an edge has weight $0$, the edge cannot be
used again.

For each $x\in S$, delete the first edge at $x$ used by the protocol, or $x$
itself if no edge at $x$ is used.  This leaves $u$ and $v$ in distinct
components, and no move can transfer $c_u$ or $c_v$ from one
component to the other.  Hence these two chips cannot meet.
\end{proof}

\begin{proposition}
If $G$ is a connected graph, then $\au(G)$ is at most the minimum size of a
maximal matching in $G$.  For $m,k\in\NN$ with $k\ge4$, some graph $G_{m,k}$
with maximum degree $k$ has a maximal matching of size $m$ and
$\au(G_{m,k})=m$, achieving equality in the bound.
\end{proposition}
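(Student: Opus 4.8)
The statement has two parts: an upper bound valid for all connected graphs, and a matching construction showing sharpness. The upper bound should be the easier half. Let $M$ be a maximal matching in $G$ of minimum size, say with edges $u_1v_1,\ldots,u_mv_m$. Maximality means every vertex outside $V(M)$ has all its neighbors inside $V(M)$. My plan is to give a protocol that leaves positive weight only on the $m$ vertices $u_1,\ldots,u_m$ (one endpoint from each matching edge). First, for each matching edge $u_iv_i$, move $c_{v_i}$ onto $u_i$, so $u_i$ has weight $2$ and $v_i$ has weight $0$. Next, handle the unmatched vertices: each unmatched vertex $w$ has weight $1$ and some neighbor in $V(M)$; I want to route $c_w$ to a vertex of weight $\ge 1$. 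The subtlety is ordering — an unmatched vertex $w$ may only be adjacent to other vertices that currently have weight $0$ (the $v_i$'s). So I would process unmatched vertices carefully: send $c_w$ to a neighbor $u_i$ (weight $2 \ge 1$, legal) when such a neighbor exists; the case where $w$'s only neighbors are $v_j$'s needs a short argument (one can first move weight back from $u_j$ to $v_j$, or observe that $w$ together with such a $v_j$ could be added to $M$, contradicting maximality, so in fact every unmatched vertex has a neighbor among the $u_i$'s or among other unmatched vertices — actually the cleanest route is: the unmatched vertices form an independent set, so each has a neighbor in $V(M)$; if that neighbor is some $v_j$, note $v_j$ has a neighbor $u_j$ still carrying weight, so first return one unit from $u_j$ to $v_j$ before absorbing $c_w$). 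After all chips are consolidated onto $\{u_1,\ldots,u_m\}$, no vertex outside this set has positive weight, so $\au(G)\le m$.

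For the sharpness half, I would construct $G_{m,k}$ as $m$ disjoint "gadgets" glued along a path or a shared structure so the graph is connected, where each gadget forces one unit of the acquisition number via Lemma~\ref{cut}. The natural gadget: take two "hub" vertices $u$ and $v$ of degree about $k$, each attached to $k-1$ private pendant-like vertices, and connect $u$ to $v$ through a small separating structure consisting of degree-$2$ vertices — say a path of length $3$ (two internal degree-$2$ vertices) joining a neighbor of $u$ to a neighbor of $v$, set up so that $\{$those two degree-$2$ vertices$\}$ form a minimal $u,v$-cut with $u,v$ having no neighbor in the cut. Lemma~\ref{cut} then says $c_u$ and $c_v$ can never meet. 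Chaining $m$ such gadgets in a line (identifying consecutive hubs, or linking them by degree-$2$ paths) and arguing pairwise via the lemma that the $2m$ hub-chips fall into $m$ "classes" that cannot merge, one gets $\au(G_{m,k})\ge m$. The maximum-degree bound $k\ge 4$ is what lets each hub have enough private neighbors plus the cut-path attachments while staying within degree $k$; this is where the hypothesis $k\ge4$ enters. Combined with the upper bound (the matching $\{u_iv_i\}$ together with one edge per gadget being maximal of size $m$, or a direct check), this gives $\au(G_{m,k})=m$.

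**Main obstacle.** The delicate point is the sharpness direction — specifically, designing the gadget so that (i) the degree-$2$ separators genuinely satisfy the hypotheses of Lemma~\ref{cut} (minimal cut, all cut-vertices of degree exactly $2$, endpoints $u,v$ with \emph{no} neighbor in the cut), (ii) the maximum degree is exactly $k$ and a maximal matching of size exactly $m$ exists, and (iii) a clean inductive or pairwise argument shows $2m$ hub-chips cannot be consolidated below $m$ vertices. Point (iii) requires care: Lemma~\ref{cut} is a statement about \emph{pairs} of chips, so I need to argue that if every $u,v$-pair across a cut is blocked then the chips partition into $m$ groups, one per gadget, and weight from distinct groups can never occupy a common vertex — this is essentially an equivalence-relation/connected-components argument on which chips can ever coincide, and must be stated precisely. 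The upper-bound half and the verification that $G_{m,k}$ has the right degree and matching parameters are routine by comparison.
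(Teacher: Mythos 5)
Your upper-bound plan follows the same idea as the paper (consolidate around the edges of a smallest maximal matching $M$), but the specific protocol you describe breaks down and your proposed repairs are not valid. After you move $c_{v_i}$ to $u_i$, the vertex $v_j$ has weight $0$ and $u_j$ has weight $2$, so ``returning one unit from $u_j$ to $v_j$'' is an illegal move: a unit acquisition move requires the \emph{receiving} vertex to have at least as much weight as the sender. Your alternative argument is also false: an unmatched vertex $w$ adjacent only to $v_j$'s does not contradict maximality of $M$, since $v_j$ is already matched (only edges between two unmatched vertices are excluded), and there is no reason $w$ must have a neighbor among the chosen $u_i$'s. The repair is simply a different order of moves: assign each unmatched vertex to a matched neighbor, and within each resulting piece (a matching edge plus its assigned unmatched vertices, which spans a tree of diameter at most $3$) first let each matched endpoint absorb the chips of its assigned unmatched neighbors while all weights are still at least $1$, and only then merge the two endpoints' weights across the matching edge (always legal from the lighter to the heavier). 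This is exactly the paper's one-line argument that the vertex set partitions into $|M|$ pieces inducing trees of diameter at most $3$, each with unit acquisition number $1$.

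The sharpness half has a more serious structural problem. In your gadget the two hubs $u$ and $v$ are \emph{separated} by the degree-$2$ cut, so they cannot be adjacent, and each hub has pendant neighbors; consequently every maximal matching must use a distinct edge at every hub, so your $m$-gadget graph has no maximal matching of size $m$ (you get at least $2m$, or $m+1$ if consecutive hubs are identified), and your closing remark that ``the matching $\{u_iv_i\}$'' is maximal of size $m$ fails because $u_iv_i$ is not even an edge. Moreover, separating the two hubs of the same gadget pushes the lower bound toward $2m$ rather than $m$, so the claimed equality $\au(G_{m,k})=m$ with a size-$m$ maximal matching cannot come out of this design. The paper's construction avoids this by making each gadget a double star $H_k$ with two \emph{adjacent} centers of degree $k$ (these $m$ central edges form the maximal matching, since every edge has a central endpoint) and placing the degree-$2$ cut vertices \emph{between} consecutive gadgets, by merging one leaf of each center with a leaf of the next copy; then one designated leaf chip per gadget pairwise cannot meet by Lemma~\ref{cut}. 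Your worry (iii) is also unnecessary: no equivalence-class argument is needed, because once $m$ designated chips pairwise never meet, at the end of any protocol they sit on $m$ distinct vertices, each of positive weight, giving $\au(G_{m,k})\ge m$ directly.
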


\begin{proof}
Let $M$ be a smallest maximal matching in $G$.  The vertices of $G$ can be
partitioned into $|M|$ sets that induce trees with diameter at most $3$, each
of which has unit acquisition number $1$.  Thus $\au(G) \le |M|$.

To construct $G_{m,k}$, first let $H_k$ be the tree with $2k$ vertices having
two central vertices of degree $k$ and $2k-2$ leaves.  Form $G_{m,k}$ from
$m$ disjoint copies of $H_k$ as follows.  For $1\le i\le k-1$, let $x$ and $y$
be leaf neighbors of the two central vertices in the $i$th copy of $H_k$, such
that $x$ and $y$ still have degree $1$ in the graph being formed.
Also choose leaf neighbors $x'$ and $y'$ of the two central vertices in the
$(i+1)$th copy of $H_k$.  Merge $x$ with $x'$ and $y$ with $y'$, forming two
vertices of degree $2$.  Note that the resulting graph $G_{m,k}$ decomposes
into $M$ copies of $H_k$, and the $m-1$ pairs of vertices formed in the 
merging steps are vertex cuts in which each vertex has degree $2$.
Figure~\ref{Gm4} shows $G_{4,5}$.

The central vertices of the copies of $H_k$ form a maximal matching of size
$m$ in $G_{m,k}$.  Each vertex covered by this matching has a leaf neighbor
in $G_{m,k}$.  Let $u_i$ be such a leaf in the $i$th copy of $H_k$.
Vertices $u_i$ and $u_j$ are separated by a vertex cut containing no neighbor
of $u_i$ or $u_j$ in which each vertex has degree $2$.  By Lemma~\ref{cut},
no two chips in $\{c_{u_1},\dots,c_{u_m}\}$ can meet under
any protocol.  Hence $\au(G_{m,k})\ge m$, and equality holds.
\end{proof}

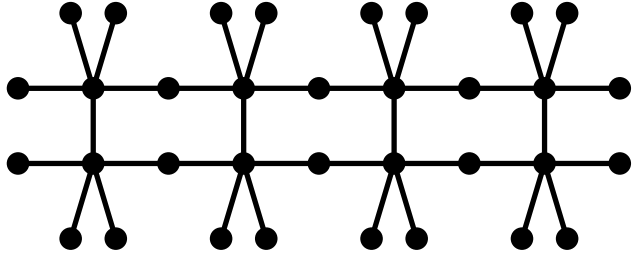
\begin{figure}[hbt]
 \begin{center}

\begin{tikzpicture}
\fill (-.3,0) circle (0.15);
\fill (1.7,0) circle (0.15);
\fill (3.7,0) circle (0.15);
\fill (5.7,0) circle (0.15);
\fill (.3,0) circle (0.15);
\fill (2.3,0) circle (0.15);
\fill (4.3,0) circle (0.15);
\fill (6.3,0) circle (0.15);
\fill (-1,1) circle (0.15);
\fill (0,1) circle (0.15);
\fill (1,1) circle (0.15);
\fill (2,1) circle (0.15);
\fill (3,1) circle (0.15);
\fill (4,1) circle (0.15);
\fill (5,1) circle (0.15);
\fill (6,1) circle (0.15);
\fill (7,1) circle (0.15);
\fill (-1,2) circle (0.15);
\fill (0,2) circle (0.15);
\fill (1,2) circle (0.15);
\fill (2,2) circle (0.15);
\fill (3,2) circle (0.15);
\fill (4,2) circle (0.15);
\fill (5,2) circle (0.15);
\fill (6,2) circle (0.15);
\fill (7,2) circle (0.15);
\fill (-.3,3) circle (0.15);
\fill (1.7,3) circle (0.15);
\fill (3.7,3) circle (0.15);
\fill (5.7,3) circle (0.15);
\fill (.3,3) circle (0.15);
\fill (2.3,3) circle (0.15);
\fill (4.3,3) circle (0.15);
\fill (6.3,3) circle (0.15);

\draw[line width=2] (-.3,0) -- (0,1) --(0,2)--(-.3,3);
\draw[line width=2] (.3,0) -- (0,1);
\draw[line width=2] (0,2) -- (.3,3);
\draw[line width=2] (1.7,0) -- (2,1) --(2,2)--(1.7,3);
\draw[line width=2] (2.3,0) -- (2,1);
\draw[line width=2] (2,2) -- (2.3,3);
\draw[line width=2] (3.7,0) -- (4,1) --(4,2)--(3.7,3);
\draw[line width=2] (4.3,0) -- (4,1);
\draw[line width=2] (4,2) -- (4.3,3);
\draw[line width=2] (5.7,0) -- (6,1) --(6,2)--(5.7,3);
\draw[line width=2] (6.3,0) -- (6,1);
\draw[line width=2] (6,2) -- (6.3,3);

\draw[line width=2] (-1,1) -- (7,1);
\draw[line width=2] (-1,2) -- (7,2);

\end{tikzpicture}

 \end{center}
\caption{The graph $G_{4,5}$.\label{Gm4}}
\end{figure}

LeSaulnier et al.~\cite{LPSWWW} proved $\at(T)\le \sqrt{n\lg n}$ when $T$
is an $n$-vertex tree with diameter $4$ and constructed an $n$-vertex tree
$T_n$ with diameter $4$ such that $\at(T_n)\ge (1-o(1))\sqrt{(n/2)\lg n}$.
For unit acquisition number, a stronger bound holds.

\begin{proposition}
If $T$ is a tree of diameter at most $4$, then
$\au(T)\leq\sqrt{n-1}\le\Delta(T)$.  Equality holds in the first inequality
only when $n-1$ is a square and $T$ is the tree of diameter $4$ such that the
central vertex and all its neighbors have degree $\sqrt{n-1}$.
\end{proposition}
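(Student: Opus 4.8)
The plan is to set up the tree of diameter at most $4$ in its canonical rooted form and then argue that we can always consolidate all weight onto a single central vertex, unless the tree is very nearly regular and already forces $\au(T)>1$ to fail — at which point we instead count. First I would fix notation: a tree of diameter at most $4$ has a central vertex $r$ (or central edge, but diameter $4$ forces a center vertex); let its neighbors be $v_1,\dots,v_s$ where $s=d(r)\le\Delta(T)$, and let each $v_i$ have $\ell_i$ leaf-children. The leaves of $T$ at distance $2$ from $r$ are the only vertices at distance $2$; the leaves adjacent to $r$ contribute to an ascending configuration automatically. So the whole question is how efficiently each ``broom'' $B_i$ (the star with center $v_i$ and its $\ell_i$ pendant leaves, together with the edge $v_i r$) can be collapsed toward $r$.

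Next I would observe the key local move: in a broom $B_i$ with $\ell_i$ leaves, one leaf can acquire $c_{v_i}$ (since $c_{v_i}$ has weight $1$, equal to the leaf), and then that leaf can acquire the remaining $\ell_i-1$ leaves one by one, reaching weight $\ell_i+1$ — but this weight now sits at a leaf, at distance $2$ from $r$, with no edge back to $r$, so it is stuck. The correct move instead is: pair up the $\ell_i$ leaves so that one leaf sends its chip to another, raising that leaf to weight $2$; then $v_i$ acquires from $\lceil \ell_i/2\rceil$ of the (weight-$1$ or weight-$2$) leaves is not legal since $v_i$ has weight $1$. The genuinely useful statement, which I would prove as the crux, is: from a broom with $\ell$ pendant leaves plus its center (all weight $1$), one can reach a configuration in which the center has weight at least $\ell$ and the number of leaves still carrying weight is at most... — essentially, the center can be built up to weight $1+\lfloor \ell/1\rfloor$? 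No: the center starts at weight $1$, can legally acquire a leaf (weight $1\le 1$), reaching $2$; then acquire another leaf? That needs the leaf to have weight $\le 2$, which it does (weight $1$), so the center reaches $3$; inductively the center can swallow all $\ell$ leaves, reaching weight $\ell+1$, using $\ell$ moves. So in fact \emph{each broom fully collapses onto its own center $v_i$}, giving $v_i$ weight $\ell_i+1$, and then $v_i$ can push all this onto $r$ only if $r$'s current weight is at least $\ell_i+1$. This is the real obstruction and where the $\sqrt{n-1}$ bound enters.

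The main argument is then a greedy ordering. Order the brooms by size, $\ell_1\ge \ell_2\ge\cdots\ge \ell_s$. Collapse broom $1$ onto $v_1$ (weight $\ell_1+1$), push everything from $v_1$ to $r$ — wait, $r$ has weight $1$ only — so instead push $r$'s chip toward $v_1$ first when $\ell_1+1\ge 1$, accumulating on $v_1$; more cleanly, I'd build one ``super-broom'' incrementally: maintain that $r$ (or a designated neighbor acting as a temporary hub) holds the current maximum. After collapsing $B_i$ onto $v_i$, $v_i$ holds $\ell_i+1$; if $r$ currently holds $W$, then $v_i$ can acquire $c_r$ exactly when $\ell_i+1\ge W$, and otherwise $r$ acquires from $v_i$ exactly when $W\ge \ell_i+1$. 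Processing brooms in decreasing order of $\ell_i$ and keeping the accumulated pile on whichever of $\{r,v_i\}$ is larger, one checks the pile weight after step $i$ is $1+\sum_{j\le i}(\ell_j+1-[\text{pass-through}])$ and the invariant ``pile weight $\ge \ell_{i+1}+1$'' is maintained precisely when $\sum_{j\le i}\ell_j +1 \ge \ell_{i+1}$, i.e. when not all brooms are tiny. The clean sufficient condition is $s\le \ell_s+1$ or a bound on $\max_i \ell_i$ relative to $\sum_j \ell_j$; translating $n-1=\sum_i(\ell_i+1) = s+\sum_i\ell_i$, the failure case is forced to be the balanced tree with $s=\sqrt{n-1}$ and every $\ell_i=\sqrt{n-1}-1$, where the pile after processing all but one broom has weight $(\sqrt{n-1}-1)(\sqrt{n-1}-1)+1 < \sqrt{n-1}$ is false — so even there it works down to $\au(T)\le $ the number of leftover brooms, giving $\au(T)\le s\le\sqrt{n-1}$ in general and equality exactly in that balanced case. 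The inequality $\sqrt{n-1}\le\Delta(T)$ is immediate since $\Delta(T)\ge s$ and $\Delta(T)\ge \ell_i+1$ for the broom centers, so $n-1 = s+\sum_i\ell_i \le \Delta + s(\Delta-1)\le \Delta^2$.

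The step I expect to be the main obstacle is pinning down the exact extremal characterization — showing that equality in $\au(T)\le\sqrt{n-1}$ forces $n-1$ to be a perfect square \emph{and} forces the fully balanced diameter-$4$ tree, ruling out all near-balanced configurations by exhibiting a strictly better protocol (using a leaf of a larger broom as an auxiliary hub, or processing in a smarter order). That is a finite but fiddly case analysis of when the greedy invariant can be beaten by a cleverer sequence; the rest — the per-broom collapse and the greedy accumulation — is routine once the local move is identified.
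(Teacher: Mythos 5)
Your per-broom collapse is fine (each $v_i$ can indeed absorb its own leaf-children one unit at a time), but the greedy merging step that is supposed to carry the whole argument does not work. When you merge the accumulated pile at $r$ (weight $W$) with a broom pile at $v_i$ (weight $\ell_i+1$), the combined weight ends up on whichever of the two vertices is heavier; if that is $v_i$, then $r$ drops to weight $0$, and no weight can ever enter $r$ again (a move into $r$ requires $\wt(r)\ge$ the sender's weight). Since only $r$ is adjacent to all broom centers, the remaining brooms are then permanently cut off from the pile. At your very first merge $r$ has weight $1<\ell_1+1$, so this is exactly what happens: the scheme yields at best $\au(T)\le s=d(r)$, one pile per broom. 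And the concluding chain ``$\au(T)\le s\le\sqrt{n-1}$'' is false, because $d(r)$ need not be at most $\sqrt{n-1}$: a diameter-$4$ tree whose center has $(n-1)/2$ neighbors, each with a single leaf-child, has $s=(n-1)/2\gg\sqrt{n-1}$. The accounting ``pile weight $\ge\ell_{i+1}+1$'' never gets off the ground, and no pigeonhole step ever ties the bound to $\sqrt{n-1}$.

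The missing idea is a dichotomy on $d(r)$ versus $k=\sqrt{n-1}$, as in the paper. If $d(r)\le k$, do not merge at all: move every leaf to its parent and $c_r$ to a neighbor, giving $\au(T)\le d(r)\le k$. If $d(r)>k$, sacrifice the minimum-degree neighbor $u$ of $r$: move $c_u$ to $r$ (both have weight $1$), after which the tree minus $u$ and its leaf-children is ascending with root $r$ and collapses entirely onto $r$, stranding only the $d(u)-1$ leaf-children of $u$; hence $\au(T)\le d(u)$, and since $n\ge 1+d(r)\,d(u)$, the assumption $d(r)>k$ forces $d(u)<k$. This $\min\{d(r),d(u)\}$ bound also delivers the equality analysis you deferred: avoiding $\au(T)<k$ forces $d(r)=d(u)=k$, i.e.\ $n-1=k^2$ and $T$ is the balanced tree. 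Finally, for sharpness on that tree you still need a lower bound, which your protocol-specific accounting cannot give: one argues that either no chip ever reaches the root (leaving $k$ disjoint weighted subtrees) or the first chip to reach the root empties its sending neighbor and strands that neighbor's $k-1$ leaves, so $\au(T)\ge k$ there.
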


\begin{proof}
Since trees of diameter at most $3$ have unit acquisition number $1$,
we may assume that $T$ has diameter $4$.  Let $v$ be the central vertex.
Let $k=\sqrt{n-1}$.  Note that when $T$ has diameter at most $4$, always
$\Delta(T)\ge k$, since otherwise $n$ vertices cannot be found.

When $d_T(v)\le k$, move all weight to $N_T(v)$ to obtain
$\au(T)\le d_T(v)\le k$.  Otherwise, move weight $1$ to $v$ from the neighbor
$u$ of $v$ having least degree.  Now the tree formed by deleting $u$ and its
leaf neighbors is ascending, so $\au(T)\leq 1+d_T(u)-1$.  Since in this case
$d_T(v)>k$, by the pigeonhole principle $d_T(u)<k$.  Thus $\au(T)\le k$.

Avoiding $\au(T)< k$ first requires $d_T(v)\ge k$.  Since
$n\ge 1+d_T(v)d_T(u)$, having $d_T(v)\ge k$ requires $d_T(u)\le k$.
Since $\au(T)\le d_T(u)$, avoiding $\au(T)< k$ requires $k=d_T(u)=d_T(v)$, and
$T$ is the tree specified in the theorem statement.

For this tree $T$, if no weight moves to the root, then weight remains in $k$
disjoint subtrees.  If a chip moves to the root, then the first such chip
leaves $k-1$ isolated vertices with positive weight.  Hence $\au(T)\ge k$.
\end{proof}

\section{Trees $T$ with $\Delta(T)=5$ and $\au(T)=1$}\label{Sec:unbounded}

For total acquisition, Lampert and Slater~\cite{LS} proved
$\at(G)\geq |V(G)|/2^{\Delta(G)}$ by observing that a vertex $v$ cannot
acquire weight more than $2^{d_G(v)}$ via total acquisition moves.
For unit acquisition, no analogous result can be proved, since there is
no bound on the amount of weight that a vertex in a tree of maximum degree 5
can acquire.

\begin{theorem}\label{thm:UnboundedWeight}
For $d\in\NN$, there is a tree $T_d$ with maximum degree $5$ in which some
vertex can acquire weight at least $d$ via a unit acquisition protocol
and $\au(T_d)=1$.
\end{theorem}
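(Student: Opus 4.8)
The plan is to build $T_d$ by an explicit recursive construction and to verify, by induction on the construction, that all of its weight can be moved onto a single designated vertex; since $\au(T_d)=1$ forces that vertex to end holding weight $|V(T_d)|$, it acquires weight at least $d$ as soon as $|V(T_d)|\ge d$, which we arrange. The phenomenon that makes this possible — and that fails for total acquisition — is that in unit acquisition a vertex may give up weight and later receive more, so a single edge at a vertex can carry many one-unit transfers in succession. Thus a vertex can be kept \emph{light} (weight $1$ or $2$) while acting as a conduit that funnels a large amount of weight from its subtree up toward the root; the total number of one-unit absorptions at a vertex is then far larger than its degree, so the estimate ``$v$ receives at most $2^{d(v)}$'' no longer applies.

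Concretely, I would define a rooted ``gadget'' recursively: a gadget has a root $r$ with $d(r)\le 4$ and $\Delta\le 5$, where $r$ has one child that is a plain leaf (used to \emph{prime} $r$) together with a bounded number (at most three) of further children that are themselves smaller gadgets; in the top-level tree $T_d$ the root uses one extra child, so it spends all $5$ edges. This is exactly where $\Delta=5$ is essential: a non-root gadget root spends one edge on its parent, one on its priming leaf, and the remaining $3$ — rather than only $2$, as $\Delta=4$ would allow — on recursive sub-gadgets, so the recursion does not collapse and the gadgets can be made arbitrarily deep and large. The key lemma to prove by induction is a strengthened claim along the lines of: \emph{for any gadget $A$ with root $r$ adjacent to an external vertex $p$, if $p$ currently holds enough weight (a bound depending on $A$) and every vertex of $A$ has weight $1$, then, using only the edges of $A$ and the edge $rp$, all of $A$'s weight can be moved onto $p$ while $r$ stays no heavier than $p$ throughout.} The base case ($A$ a single vertex) is one move; for the inductive step, first prime $r$ by absorbing its leaf child, then process the sub-gadgets one at a time via the induction hypothesis — each funnels its weight into $r$ — interleaved with $r$ passing each received unit up to $p$, so that $r$ oscillates within a bounded window and is never stranded.

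The step I expect to be the main obstacle is precisely this orchestration: guaranteeing that no conduit vertex is ever emptied while weight still remains below it (once a vertex reaches weight $0$ it can never receive again), arranging the ``priming'' to be bootstrapped consistently from the root downward so that every receiver is heavy enough for each one-unit transfer into it, and making the $A$-dependent weight thresholds in the induction hypothesis fit the degree budget (leaf $+$ three sub-gadgets $+$ parent $=5$). Once the lemma holds, applying it at the top level — with $r$ the gathering vertex and no external $p$, so $r$ merely accumulates — gives $\au(T_d)=1$; choosing the recursion deep enough makes $|V(T_d)|\ge d$, so $r$ acquires weight $|V(T_d)|\ge d$, and a routine check confirms $\Delta(T_d)\le 5$ at every vertex.
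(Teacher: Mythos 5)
Your overall framing is fine (if $\au(T_d)=1$ then the gathering vertex ends with weight $|V(T_d)|\ge d$, and the freedom to pass weight through a vertex repeatedly is indeed what separates unit from total acquisition), but the core of your argument rests on a claim that is false as stated: that each gadget root can act as a conduit that ``oscillates within a bounded window'' while the entire weight of its subtree is funneled through it. Observe (as the paper does in its caterpillar analysis) that whenever a chip moves from $u$ to $v$ we have $\wt(v)\ge\wt(u)$ before the move, so the chip's height in its stack strictly increases with every move; consequently a chip that has traveled distance $D$ from a leaf forces the vertex receiving it to have weight at least $D+1$ at that moment. Since maximum degree $5$ forces the depth of your gadgets to grow with their size, the conduit adjacent to the sink must reach weights growing (at least) linearly in the depth of the gadget below it --- it cannot stay in a bounded window, and ``$r$ stays no heavier than $p$'' cannot be maintained with $p$ merely ``heavy enough'' at the start, especially once you interleave moves that drain $r$ upward.

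If you instead let the threshold in your induction hypothesis grow with the gadget (weight at a vertex comparable to its height above the leaves), you arrive at exactly the hard part, which your sketch flags as ``the main obstacle'' but does not resolve: showing that the degree budget supplies enough fresh chips at every level to raise every interior vertex to the required weight. This is precisely what the paper's proof does. It builds $T_d$ level by level, converts it after a partial protocol into an \emph{ascending} tree $T'_d$ in which active vertices at level $i$ carry weight $d+1-i$, and pays for the needed increment at every active vertex by routing one chip up from a fresh bottom-level leaf; the feasibility of this is the counting recurrence $a_d=4a_{d-1}-\sum_{i=1}^{d-1}a_i$, whose solution $a_d=(3d+5)2^{d-2}$ stays positive --- and the analogous recurrence with three new leaves per vertex (the degree-$4$ analogue) goes negative, which is why the construction genuinely needs maximum degree $5$. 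Your proposal contains no such count (one priming leaf plus three sub-gadgets per vertex is asserted, not analyzed), so the decisive quantitative step of the theorem is missing.
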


\begin{proof}
We inductively construct a rooted tree $T_{d}$ with maximum degree $5$ in which
we can move all the weight to an ascending tree.  We use $T'_d$ to denote the
ascending version of $T_d$ after this partial protocol.  Since $T'_d$ is
ascending, $\au(T_d)=1$.  

We construct $T_d$ from $T'_{d-1}$ by adding leaves.  Since $T_d$ contains
$T_{d-1}$, the partial protocol can be followed on $T_{d-1}$ to obtain
$T'_{d-1}$ within $T_d$.  Further unit acquisition moves will then produce
$T'_d$.  The tree $T_d$ will have $d$ levels, with the root as the first.
Thus $T_1$ consists of only the root.  We also specify $T_2$ explicitly; it
consists of the root plus five children.  Let $T'_2$ be the ascending tree
produced by moving the chip from one child to the root (see
Figure~\ref{fig:AscTrees}).

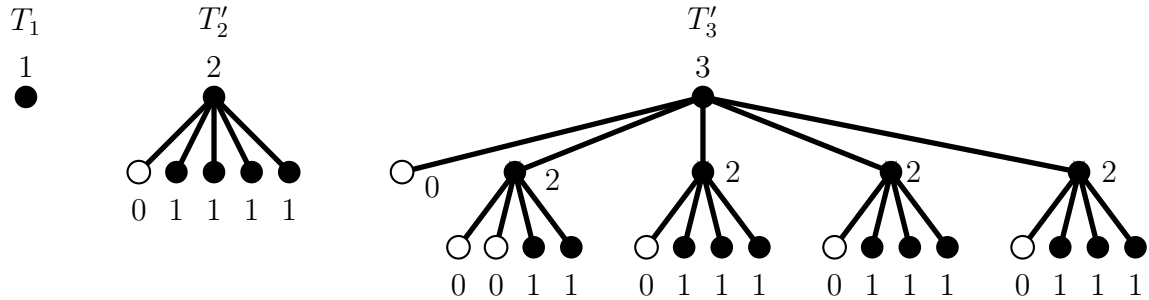
\begin{figure}[hbt]
\begin{center}

\begin{tikzpicture}

\fill (2.5,2) circle (0.15);
\node at (2.5,3) {$T_1$};
\node at (2.5,2.4) {$1$};

\draw [line width=2] (4,1)--(5,2)--(6,1);
\draw [line width=2] (4.5,1)--(5,2)--(5.5,1);
\draw [line width=2] (5,1)--(5,2);

\fill (5,2) circle (0.15);
\draw [fill=white,thick] (4,1) circle (0.15);
\fill (4.5,1) circle (0.15);
\fill (5,1) circle (0.15);
\fill (5.5,1) circle (0.15);
\fill (6,1) circle (0.15);

\node at (5,3) {$T'_2$};
\node at (5,2.4) {$2$};
\node at (4,.5) {$0$};
\node at (4.5,.5) {$1$};
\node at (5,.5) {$1$};
\node at (5.5,.5) {$1$};
\node at (6,.5) {$1$};

\draw [line width=2] (7.5,1) -- (11.5,2);
\draw [line width=2] (9,1) -- (11.5,2);
\draw [line width=2] (11.5,1) -- (11.5,2);
\draw [line width=2] (14,1) -- (11.5,2);
\draw [line width=2] (16.5,1) -- (11.5,2);

\draw [line width=2] (8.25,0) -- (9,1) -- (8.75,0);
\draw [line width=2] (9.25,0) -- (9,1) -- (9.75,0);

\draw [line width=2] (10.75,0) -- (11.5,1) -- (11.25,0);
\draw [line width=2] (11.75,0) -- (11.5,1) -- (12.25,0);

\draw [line width=2] (13.25,0) -- (14,1) -- (13.75,0);
\draw [line width=2] (14.25,0) -- (14,1) -- (14.75,0);

\draw [line width=2] (15.75,0) -- (16.5,1) -- (16.25,0);
\draw [line width=2] (16.75,0) -- (16.5,1) -- (17.25,0);

\fill (11.5,2) circle (0.15);
\draw [fill=white,thick] (7.5,1) circle (0.15);

\fill (9,1) circle (0.15);
\draw [fill=white,thick] (8.25,0) circle (0.15);
\draw [fill=white,thick] (8.75,0) circle (0.15);
\fill (9.25,0) circle (0.15);
\fill (9.75,0) circle (0.15);

\fill (11.5,1) circle (0.15);
\draw [fill=white,thick] (10.75,0) circle (0.15);
\fill (11.25,0) circle (0.15);
\fill (11.75,0) circle (0.15);
\fill (12.25,0) circle (0.15);

\fill (14,1) circle (0.15);
\draw [fill=white,thick] (13.25,0) circle (0.15);
\fill (13.75,0) circle (0.15);
\fill (14.25,0) circle (0.15);
\fill (14.75,0) circle (0.15);

\fill (16.5,1) circle (0.15);
\draw [fill=white,thick] (15.75,0) circle (0.15);
\fill (16.25,0) circle (0.15);
\fill (16.75,0) circle (0.15);
\fill (17.25,0) circle (0.15);

\node at (11.5,3) {$T'_3$};
\node at (11.5,2.4) {$3$};
\node at (7.9,.8) {$0$};
\node at (9.5,.9) {$2$};
\node at (11.9,1) {$2$};
\node at (14.3,1) {$2$};
\node at (16.9,1) {$2$};

\node at (8.25,-.5) {$0$};
\node at (8.75,-.5) {$0$};
\node at (9.25,-.5) {$1$};
\node at (9.75,-.5) {$1$};

\node at (10.75,-.5) {$0$};
\node at (11.25,-.5) {$1$};
\node at (11.75,-.5) {$1$};
\node at (12.25,-.5) {$1$};

\node at (13.25,-.5) {$0$};
\node at (13.75,-.5) {$1$};
\node at (14.25,-.5) {$1$};
\node at (14.75,-.5) {$1$};

\node at (15.75,-.5) {$0$};
\node at (16.25,-.5) {$1$};
\node at (16.75,-.5) {$1$};
\node at (17.25,-.5) {$1$};

\end{tikzpicture}
\end{center}
\caption{The first three trees for Theorem~\ref{thm:UnboundedWeight},
converted to ascending trees.}\label{fig:AscTrees}
\end{figure}

We call a vertex with positive weight in $T'_d$ \textit{active}, except that
some leaves retaining weight $1$ may be designated inactive.  The root vertex
is at level $1$; the leaves are at level $d$.  In $T'_d$, the active vertices
at level $i$ have weight $d+1-i$.  Let $a_d$ denote the number of active
vertices at level $d$ in $T'_d$, so $a_2=4$.

For $d\ge3$, we construct $T_d$ from $T'_{d-1}$ by appending four leaves at
each active vertex on level $d-1$ and then viewing each vertex as starting
with weight $1$.  To convert $T_d$ to $T'_d$, first perform the protocol
on the copy of $T_{d-1}$ within $T_d$ formed by levels $1$ through $d-1$.
By the induction hypothesis, this puts weight $d-i$ at each active vertex in
level $i$, for $i<d$.

We now want to use chips from the leaves to increase the weight by $1$ at
(most) active non-leaf vertices.  For $i$ from $1$ through $d-1$ successively,
for each active vertex $u$ at level $i$ currently having an active leaf below
it on level $d$, choose such a leaf $x$ and move $c_x$ up the path through the
tree to reach $u$.  This is possible, because the tree remains ascending
throughout the process.  See $T_3$ in Figure~\ref{fig:AscTrees}.

If there is no such leaf below $u$, then we instead choose some other 
remaining leaf $x$ that has weight $1$, arbitrarily, and designate $x$
inactive.  The tree remains ascending, because the parent of this leaf has
weight at least $1$.  More importantly, when we grow $T_{d+1}$ we will not
add leaves below $x$, since $x$ is inactive.  Hence it causes no difficulty
if the parent of $x$ also ends the process with weight $1$.

The process thus can be completed if the number of leaves added in forming
$T_d$ is at least the total number of active vertices in $T'_{d-1}$.
The value $a_i$ is the number of active vertices left at level $i$ when
$T'_i$ is formed, and this always remains the number of active vertices at
level $i$.  For $d\ge3$, we thus have 
$$a_{d} = 4a_{d-1} - \sum_{i=1}^{d-1}a_{i}.$$
We can continue growing larger trees with the desired properties if $a_d>0$ for
$d\ge1$.

With $a_1=1$ and $a_2=4$, writing the recurrence as $4a_{d-1}=\SE i1d a_i$
for $d\ge3$ yields $a_3=11$.  The difference of two consecutive instances of
the recurrence yields $a_d = 4a_{d-1} - 4a_{d-2}$ for $d\ge4$, with $a_2=4$ and
$a_3=11$.  The solution $a_d = (3d + 5)2^{d-2}$ for $d\ge2$ is easily checked
by induction.  As desired, $a_d>0$ for all $d$, which completes the proof.
\end{proof}

For graphs with maximum degree $1$, $2$, or $3$, straightforward case analysis
shows that a vertex can acquire weight at most $2$, $4$, or $10$, respectively.
With maximum degree $4$, growing three leaves at active vertices, the
corresponding recurrence in the method above is $a_d = 3(a_{d-1} - a_{d-2})$
for $d\ge4$, with $a_2 =3$ and $a_3=5$.  Also $a_4=6$, $a_5=3$, and $a_6=-9$,
so this construction does not grow beyond depth $5$, since the nine vertices
generated at level $6$ do not suffice to augment the higher active vertices.
There are $56$ vertices at this point, so a vertex an acquire weight $56$.
With more careful analysis it may be possible obtain a bound on the number of
vertices in a tree with unit acquisition number $1$ and maximum degree $4$.

\section{Unit Acquisition on Caterpillars}\label{Sec:trees}


Toward the further understanding of unit acquisition on trees,
in this section we characterize the caterpillars with $\au(T)=1$ and give a
linear-time algorithm to compute unit acquisition number on caterpillars.

\begin{definition}
The path obtained by deleting the leaves of a caterpillar is called the
{\it spine} of the caterpillar.  Given a caterpillar $T$, let
$v_0,\ldots, v_{k+1}$ denote the vertices in the spine of $T$, indexed in order.
The vertices $v_1,\ldots,v_k$ are the {\it internal vertices} of the spine of
$T$.  In particular, note that $v_0$ and $v_{k+1}$ are not leaves of $T$.

For $v\in V(T)$, let $d'(v)$ denote the number of leaf neighbors of $v$.
For $s\in\NN$, let $\ell(s)=\SE i1s\CL{i/2}=\CL{\FR{s+1}{2}}\FL{\FR{s+1}{2}}$.
During a protocol, let $\wt(v)$ be the current weight on $v$.

Let $S$ be a set of $s$ consecutive internal vertices on the spine of $T$,
with $S=\{v_{i},\ldots,v_{i+s-1}\}$.  Let the {\it pyramid} of $S$ be a set of
cells arranged so that $\min\{j-i+1,i+s-j\}$ cells are stacked above $v_j$
(see Figure~\ref{fig:pyramid}).  Let $b_{j,1},\ldots,b_{j,\min\{j-i+1,i+s-j\}}$
denote the cells above $v_j$.  Counting columns moving in from both ends
shows that the pyramid of $S$ has $\ell(s)$ cells.
\end{definition}

\begin{figure}[hbt]
\centering
\begin{tikzpicture}
\foreach \x in {17,...,24}
	\fill (\x,-.5) circle (0.15);
\node at (17,-1) {$v_{2}$};
\node at (18,-1) {$v_{3}$};
\node at (19,-1) {$v_{4}$};
\node at (20,-1) {$v_{5}$};
\node at (21,-1) {$v_{6}$};
\node at (22,-1) {$v_{7}$};
\node at (23,-1) {$v_{8}$};
\node at (24,-1) {$v_{9}$};
\draw [line width = 2] (17,-.5) -- (24,-.5);
\draw (16.5,0) -- (24.5,0) -- (24.5,1) -- (16.5,1) -- (16.5,0);
\draw (17.5,0) -- (17.5,2) -- (23.5,2) -- (23.5,0);
\draw (18.5,0) -- (18.5,3) -- (22.5,3) -- (22.5,0);
\draw (19.5,0) -- (19.5,4) -- (21.5,4) -- (21.5,0);
\draw (20.5,0) -- (20.5,4);
\node at (17,.5) {$b_{2,1}$};
\node at (18,.5) {$b_{3,1}$};
\node at (18,1.5) {$b_{3,2}$};
\node at (19,.5) {$b_{4,1}$};
\node at (19,1.5) {$b_{4,2}$};
\node at (19,2.5) {$b_{4,3}$};
\node at (20,.5) {$b_{5,1}$};
\node at (20,1.5) {$b_{5,2}$};
\node at (20,2.5) {$b_{5,3}$};
\node at (20,3.5) {$b_{5,4}$};
\node at (21,.5) {$b_{6,1}$};
\node at (21,1.5) {$b_{6,2}$};
\node at (21,2.5) {$b_{6,3}$};
\node at (21,3.5) {$b_{6,4}$};
\node at (22,.5) {$b_{7,1}$};
\node at (22,1.5) {$b_{7,2}$};
\node at (22,2.5) {$b_{7,3}$};
\node at (23,.5) {$b_{8,1}$};
\node at (23,1.5) {$b_{8,2}$};
\node at (24,.5) {$b_{9,1}$};

\node at (9,-1) {$v_{1}$};
\node at (10,-1) {$v_{2}$};
\node at (11,-1) {$v_{3}$};
\node at (12,-1) {$v_{4}$};
\node at (13,-1) {$v_{5}$};
\node at (14,-1) {$v_{6}$};
\node at (15,-1) {$v_{7}$};
\draw [line width = 2] (9,-.5) -- (15,-.5);
\foreach \x in {9,...,15}
	\fill (\x,-.5) circle (0.15);
\draw (8.5,0) -- (15.5,0) -- (15.5,1) -- (8.5,1) -- (8.5,0);
\draw (9.5,0) -- (9.5,2) -- (14.5,2) -- (14.5,0);
\draw (10.5,0) -- (10.5,3) -- (13.5,3) -- (13.5,0);
\draw (11.5,0) -- (11.5,4) -- (12.5,4) -- (12.5,0);
\node at (9,.5) {$b_{1,1}$};
\node at (10,.5) {$b_{2,1}$};
\node at (10,1.5) {$b_{2,2}$};
\node at (11,.5) {$b_{3,1}$};
\node at (11,1.5) {$b_{3,2}$};
\node at (11,2.5) {$b_{3,3}$};
\node at (12,.5) {$b_{4,1}$};
\node at (12,1.5) {$b_{4,2}$};
\node at (12,2.5) {$b_{4,3}$};
\node at (12,3.5) {$b_{4,4}$};
\node at (13,.5) {$b_{5,1}$};
\node at (13,1.5) {$b_{5,2}$};
\node at (13,2.5) {$b_{5,3}$};
\node at (14,.5) {$b_{6,1}$};
\node at (14,1.5) {$b_{6,2}$};
\node at (15,.5) {$b_{7,1}$};

\end{tikzpicture}
\caption{Pyramids of lengths $7$ and $8$ on the spine of a
caterpillar.}\label{fig:pyramid}

\end{figure}
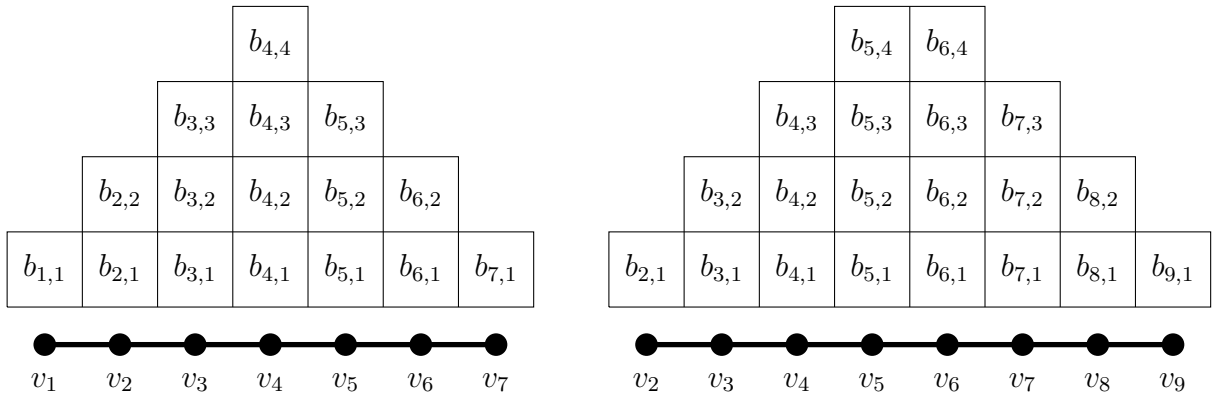

\begin{example}\label{P5}
In $P_5$ and in the caterpillar $J$ obtained from $P_5$ by appending a leaf
at the central vertex, the spine has three vertices, which we label
$v_0,v_1,v_2$ (so $k=1$).  The condition in the theorem below requires
$d'(v)\ge 1$ for every internal spine vertex, so $\au(P_5)>1$.  Indeed,
applying Lemma~\ref{cut} to leaf neighbors of the ends of the spine also
requires each internal spine vertex to have at least one leaf neighbor when
$\au(T)=1$.  Since $J$ has only one internal spine vertex, and $d'(v_1)=1$,
the condition below is satisfied, and $\au(J)=1$.
\end{example}

\begin{theorem}\label{thm:caterpillar}
Let $T$ be a caterpillar with spine vertices $v_0,v_1,\ldots,v_k,v_{k+1}$ in
order.  The unit acquisition number of $T$ is $1$ if and only if for
$1\le s\le k$, every set $S$ of $s$ consecutive internal vertices on the spine
satisfies $\sum_{v\in S}d'(v)\ge \ell(s)=\CL{\frac{s+1}{2}}\FL{\frac{s+1}{2}}$.
\end{theorem}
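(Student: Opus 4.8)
The plan is to prove both directions by analyzing how chips can move along the spine. The key observation is that the only way for weight to consolidate in a caterpillar is to move leaf-chips onto spine vertices and then slide accumulated weight left and right along the spine toward a sink. For the \emph{necessity} direction, I would argue the contrapositive: suppose some set $S=\{v_i,\ldots,v_{i+s-1}\}$ of $s$ consecutive internal spine vertices has $\sum_{v\in S}d'(v)<\ell(s)$. The idea is that $v_{i-1}$ and $v_{i+s}$ act as a barrier — more precisely, I want to show that in any protocol, the total weight that can ever cross from inside $S$ (together with its private leaves) out past either end is limited. The pyramid bookkeeping is the right device here: assign to each spine vertex $v_j\in S$ a ``capacity'' equal to its height in the pyramid, namely $\min\{j-i+1,\,i+s-j\}$, which is exactly the number of leaf-chips that $v_j$ could plausibly need in order for weight to flow past it. The sum of these capacities is $\ell(s)$ cells, and each private leaf of $S$ supplies one chip; if there are fewer than $\ell(s)$ such chips, I'd show (by an exchange/potential argument tracking the weights of spine vertices in $S$, exploiting that a unit move into a spine vertex requires it to already hold at least as much as its neighbor) that weight cannot be funneled out one side, nor consolidated at a single vertex of $S$, so at least two vertices retain positive weight. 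I would formalize ``cannot flow past $v_j$'' via the condition that to move a chip from $v_{j}$ to $v_{j+1}$ the receiver must weigh at least as much, which forces a minimum number of chips to have been delivered into the left portion first — this is where the $\CL{i/2}$ terms in $\ell(s)$ come from.

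For the \emph{sufficiency} direction, assuming every such $S$ satisfies the inequality, I would give an explicit protocol producing an ascending tree rooted at $v_0$ (or $v_{k+1}$), so that the Observation on ascending trees gives $\au(T)=1$. The natural strategy: process the spine from the middle outward, or equivalently show by a Hall-type / flow argument that the pyramid of the \emph{entire} internal spine $S=\{v_1,\ldots,v_k\}$ can be ``filled'' — each cell $b_{j,m}$ of the pyramid gets assigned a distinct leaf-chip from some vertex in $S$, with $b_{j,m}$'s chip coming from a leaf of a vertex within distance roughly $m$ of $v_j$ so that it can physically be walked there. Deficiency version of Hall's theorem applied to the bipartite ``leaf-chips vs.\ pyramid cells'' incidence (where a leaf of $v_a$ can fill cell $b_{j,m}$ iff $v_j$ lies in the appropriate sub-block) will show such an assignment exists precisely because every consecutive block $S'$ of internal vertices has at least $\ell(|S'|)$ private leaf-chips. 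Once the pyramid is filled, moving each assigned chip up to its target spine vertex builds, level by level from the outside in, a weight profile on the spine that strictly increases toward the ends; combined with remaining leaves (weight $1$, below spine vertices of weight $\ge 1$) this is ascending, and one more sweep collects everything at $v_0$.

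The main obstacle I anticipate is the necessity direction — specifically, making rigorous the claim that a leaf-chip deficiency in one block $S$ actually \emph{obstructs} consolidation, given that chips from outside $S$ (from vertices left of $v_i$ or right of $v_{i+s-1}$, and from $v_{i-1},v_{i+s}$ themselves) might be pushed \emph{into} $S$ to help. I would handle this with a carefully chosen potential or cut argument in the spirit of Lemma~\ref{cut}: look at the two spine edges $v_{i-1}v_i$ and $v_{i+s-1}v_{i+s}$, and show that the net weight transported across these two edges, in either direction, is controlled — roughly, chips can only enter $S$ from one side ``for free'' and any attempt to route them through to the other side hits the same pyramid capacity bound. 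The delicate part is the interaction of the two ends: weight might oscillate. I expect the clean way is to fix, for the given $S$, a presumed-optimal protocol and to track, for each $j$ with $i\le j\le i+s-1$, the quantity $\wt(v_j)$ minus a pyramid-derived target, showing the deficiency propagates and at least two spine vertices in $S\cup\{v_{i-1},v_{i+s}\}$ end with positive weight. The sufficiency direction's Hall argument is routine once set up; the equivalence $\ell(s)=\CL{\frac{s+1}{2}}\FL{\frac{s+1}{2}}$ and the ``columns from both ends'' count are immediate.
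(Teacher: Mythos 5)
The decisive gap is in the necessity direction, which you yourself flag as the part you have not resolved. Your proposed invariant — tracking $\wt(v_j)$ for $v_j\in S$ against pyramid-derived targets, and concluding that "at least two spine vertices in $S\cup\{v_{i-1},v_{i+s}\}$ end with positive weight" — cannot work as stated. First, a protocol may pump arbitrarily many chips from outside $S$ onto the spine vertices of $S$, so every raw weight $\wt(v_j)$ can exceed its pyramid target; no inequality phrased in terms of vertex weights alone can persist. Second, the conclusion is aimed at the wrong event: in a putative optimal protocol every vertex of $S$ may well end with weight $0$, the lone surviving vertex being far from $S$, so failure of consolidation is not witnessed by leftover weight inside $S$. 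The paper's proof supplies exactly the missing ideas: it reduces $\au(T)=1$ to the requirement that two \emph{distinguished} chips, one on a leaf of $v_{i-1}$ and one on a leaf of $v_{i+s}$, must meet; it normalizes protocols so that chips move in last-in-first-out order, which makes the stack height of any chip strictly increase with each move and hence keeps these two chips strictly above the pyramid over $S$ forever; and it runs an induction on moves with the potential $h=\sum_{u\in V_{x,y}}g(u)-l$ (empty pyramid cells strictly between the two chips, minus leaf chips still available there), including a subtle credit when one distinguished chip retreats and later returns. None of these ingredients (distinguished chips rather than weights, height monotonicity, the mixed cell/leaf potential between the two chips) appears in your sketch, so the necessity half remains a plan rather than a proof.

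Your sufficiency outline does follow the paper's route: the bipartite "leaf chips versus pyramid cells" graph, with a leaf of $v_j$ eligible for cell $b_{i,h}$ exactly when $h>|i-j|$, and Hall's condition verified because minimal violating sets correspond to consecutive blocks — this is the paper's Claim 1. But existence of the assignment is not the end of the argument: a chip rises by at least one pyramid level per spine step and must land in its assigned cell exactly, so the moves must be scheduled compatibly; an arbitrary Hall matching processed "level by level from the outside in" is not obviously feasible. The paper handles this by taking a \emph{minimum-cost} matching, showing the associated slope-$\pm1$ segments in the grid diagram cannot cross (Claim 2), and filling cells in a "lower segments first" order. Also note that the filled profile puts weight $1+\min\{j,k+1-j\}$ on $v_j$, which peaks at the middle of the spine; the resulting ascending tree is rooted at a central spine vertex, not at $v_0$, and the weights increase toward the middle rather than "toward the ends." These sufficiency issues are repairable, but the necessity direction needs the genuinely new mechanism described above.
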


Necessity and sufficiency of the condition both take some work, so we
separate these proofs into two items.

\begin{theorem}
The condition on $S$ in Theorem~\ref{thm:caterpillar} is necessary for
$\au(T)=1$.
\end{theorem}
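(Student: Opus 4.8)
The strategy is to show that if some set $S$ of $s$ consecutive internal spine vertices violates the inequality, i.e.\ $\sum_{v\in S}d'(v)<\ell(s)$, then no protocol can reduce $T$ to a single vertex with positive weight. The intuition behind $\ell(s)=\sum_{i=1}^{s}\CL{i/2}$ is that weight reaching the ``outside'' of the interval $S$ along the spine is throttled: a chip crossing an edge of the spine near the boundary of $S$ can only do so after enough weight has accumulated, and the pyramid picture records exactly how much ``local'' weight (from leaves attached inside $S$, plus the $s$ spine chips themselves) is needed to push weight out of a window of $S$. So I would set up an invariant of the following shape: at every point in any protocol, for the interval $S=\{v_i,\dots,v_{i+s-1}\}$, the total weight that has ever left $S$ (counted as chips crossing the two boundary spine edges $v_{i-1}v_i$ and $v_{i+s-1}v_{i+s}$, in the outward direction, net of inward crossings) is bounded above in terms of the weight that has been ``collected'' at the spine vertices of $S$, and this in turn is bounded by $s+\sum_{v\in S}d'(v)$, the total number of chips that start inside the closed neighborhood of $S$'s spine.

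First I would make precise the notion of a chip crossing a spine edge. Because moves transfer one unit at a time, every transfer across edge $v_jv_{j+1}$ carries a specific chip, and a chip can oscillate back and forth; what matters is that to move a unit \emph{out} of $v_j$ (toward $v_{j-1}$, say) the receiving vertex must already have weight $\ge\wt(v_j)$. Iterating this along a path of length $t$ inside $S$ toward the boundary, one sees (this is the key local lemma) that before a chip can be delivered from column $j$ out through the near end, the weights along that length-$t$ sub-path must have been built up to at least $1,2,\dots$ in some staircase pattern — which costs at least $\CL{1/2}+\CL{2/2}+\cdots$ units drawn from leaves of $S$ or the spine chips. Summing the two ends and both directions gives the $\ell(s)$ lower bound on the total ``fuel'' that must be consumed inside $S$ before $S$ can be emptied. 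Since $S$ has exactly $s$ spine chips and $\sum_{v\in S}d'(v)$ leaf chips available locally, and any help from outside $S$ must itself first cross the boundary (and hence is limited by the same mechanism applied to shifted windows), a violation $\sum_{v\in S}d'(v)<\ell(s)-s$ — wait, the correct accounting is $\sum_{v\in S}d'(v)<\ell(s)$, with the $s$ spine chips already incorporated into how $\ell$ is defined via $\sum\CL{i/2}$ — forces that at least one chip among $\{c_v:v\in S\text{ on the spine}\}$ or attached to $S$ can never leave $S$, and symmetrically weight from the two sides cannot both be extracted; hence $T$ cannot be collapsed to one vertex.

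Concretely, the steps in order: (1) reduce to the case where $v_0$ and $v_{k+1}$ each contribute only their own chip, using Lemma~\ref{cut} as in Example~\ref{P5} to argue leaf neighbors of the spine ends are effectively cut off (this justifies treating the two ends of $S$ symmetrically and handles the boundary); (2) prove the staircase lemma: to move one chip from $v_j$ outward through $t$ spine edges requires at least $\sum_{r=1}^{t}\CL{r/2}$ units of weight to have been absorbed by $\{v_j,v_{j+1},\dots\}$ along the way, by induction on $t$ tracking the minimum weight profile a legal outward cascade must create; (3) apply this to both ends of $S$, add the counts, observe the columns overlap exactly so the combined requirement is $\ell(s)$ by the ``counting columns in from both ends'' remark in the pyramid definition; (4) conclude that if $\sum_{v\in S}d'(v)<\ell(s)$ then not all weight originating in or attached to $S$ can be extracted, so some vertex of $S$ retains positive weight at the end, hence $\au(T)>1$. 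The main obstacle is step (2)–(3): chips can move back into $S$ from outside and can shuttle among spine vertices of $S$, so the invariant has to be robust to weight re-entering $S$; I expect to handle this by a potential-function argument in which re-entering weight is charged against the fuel it consumed to leave some other window, making the bound monotone, and by choosing $S$ to be a \emph{minimal} violator so that every proper subinterval satisfies the inequality and therefore supplies no surplus.
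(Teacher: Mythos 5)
Your plan captures the right intuition---the pyramid measures local leaf weight that must be assembled on the spine of $S$---but two load-bearing steps do not hold up. First, the conclusion you aim for in step (4) is not the right one: showing that $S$ cannot be ``emptied,'' i.e.\ that some vertex of $S$ retains positive weight, does not contradict $\au(T)=1$, because the single final vertex carrying all the weight may itself lie in $S$, in which case no weight ever has to be extracted from $S$ and an outflow invariant across the boundary of $S$ says nothing. The obstruction must be phrased about weight from the two sides of $S$ \emph{meeting}: as the paper does, one takes leaf neighbors $x$ and $y$ of the two spine vertices flanking $S$ (these exist by Lemma~\ref{cut}), observes that if $\au(T)=1$ then the chips $c_x$ and $c_y$ must meet at the final vertex wherever it is, and proves that they cannot. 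Your closing remark that ``weight from the two sides cannot both be extracted'' gestures at this, but it is never developed, and the region across which your invariant is measured must shrink and grow with the positions of two designated chips (the paper's $\Vxy$), not stay fixed at $S$.

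Second, steps (2)--(3) are exactly where the difficulty lies, and the proposed accounting is vulnerable to double counting and to outside help. A single outward cascade forces receiving weights of at least $1,2,3,\dots$ along the path (the $\CL{r/2}$ profile only arises when the two directions are combined columnwise via $\min\{i,s+1-i\}$), and the units realizing those weights are not ``consumed'': the same weight sitting on a spine vertex can serve the leftward and the rightward cascade at different times, can move, and can be supplied by chips from leaves just \emph{outside} $S$ (for instance a leaf chip of the flanking vertex can enter $S$ early, while spine weights are still $1$), which never left any violating window---so choosing $S$ to be a minimal violator does not plug this leak. The paper circumvents both problems with specific devices: a LIFO convention on indistinguishable chips, so that a chip's height in its stack strictly increases with every move, whence $c_x$ and $c_y$ can never occupy pyramid cells and any chip arriving from outside the window lands above them and cannot fill a cell; the restriction to protocols that never move the spine chip of a vertex lying between $c_x$ and $c_y$; and a single potential $h=\sum_{u_i\in\Vxy}g(u_i)-l$ shown to remain positive move by move. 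Without a substitute for these ideas---in particular a precise reason why weight entering from outside cannot fill the pyramid cells that matter---your invariant cannot be made monotone, and the proof does not go through as sketched.
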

\begin{proof}
Let $S=\{\VEC vj{j+s-1}\}$.  To simplify notation, let $u_i=v_{i+j-1}$ for
$0\le i\le s+1$, so $S=\{\VEC u1s\}$.  Assume $\au(T)=1$.  As noted in
Example~\ref{P5}, $d'(u_i)\ge1$ for all $i$, by Lemma~\ref{cut}.  Hence $u_0$
has a leaf neighbor $x$, and $u_{s+1}$ has a leaf neighbor $y$, and in some
protocol $\A$ the chips $c_x$ and $c_y$ must meet.  We prove
$\smvs d'(v)\ge\ell(s)$.

Since chips in fact are indistinguishable, we may assume that at any point in
$\mathcal A$ the chips on a vertex $v$ are listed from bottom to top in their
order of arrival at $v$.  We may also assume that the a chip moved off a vertex
is the most recent chip that arrived there, again since chips are
indistinguishable.  In particular, the bottom chip on $v$ is always $c_v$.
Also, since $\au(T)=1$ requires all chips eventually to be on one vertex, it
requires $c_x$ and $c_y$ to meet even under this restriction on the movement of
chips.

Since a chip arriving at $v$ will be placed on the top, by the weight rule for
moves the height of a chip in its current stack strictly increases each time
it moves.  Since $c_x$ and $c_y$ start two steps from $S$, this
implies that they can never occupy cells in the pyramid over $S$.  Indeed, since
they start two steps away from $S$, when on a vertex of $S$ they must be at 
least one step above the pyramid (here we consider the chip on $u_j$ to be at the point $(j,0)$).

If a chip $c_v$ ever moves from $v$, then thereafter $\wt(v)=0$.  If $c_v$
moves from $v$ when $c_x$ and $c_y$ are separated by $v$, then $c_x$ and $c_y$
cannot meet under $\A$.  We will consider only protocols that move no such
chips before $c_x$ and $c_y$ meet.  This also requires that leaf neighbors of
spine vertices between $c_x$ and $c_y$ always have weight at most $1$.

For $1\le i\le s$, let $\mu_i=\min\{i,s+1-i\}$; note that $\mu_i$ is the number
of cells above $u_i$ in the pyramid over $S$.  At any point in $\A$, let
$g(u_i)=\max\{\mu_i-\wt(u_i)+1,0\}$ for $u_i\in S$.  Viewing chips at $u_i$ as
filling cells above $u_i$ in the pyramid over $S$, $g(u_i)$ gives the number of
empty cells above $u_i$ (there may also be chips above the pyramid when there
are no empty cells above a vertex).  Recall that $c_x$ and $c_y$ cannot occupy
cells inside the pyramid over $S$.

During $\A$, let $\Vxy$ denote the set of vertices in $S$ internal to the path
in $T$ joining the current locations of $c_x$ and $c_y$.
Let $l$ denote the number of chips on leaf neighbors of vertices in $\Vxy$.
As noted earlier, such leaves have at most one chip, so $l$ equals the
number of leaf neighbors of vertices in $\Vxy$ having weight $1$.

At a given time in $\A$, let $h=\sum_{u_i\in\Vxy}g(u_i)-l$.
Under the assumption $\smvs d'(v)<\ell(s)$, we will show by induction on the
number of moves in $\A$ that $h>0$ and $\Vxy\ne\nul$ throughout $\A$.  With
$\Vxy$ remaining nonempty, $c_x$ and $c_y$ never meet.
Initially, $\Vxy=S$ and $g(u_i)=\mu_i$ for $u_i\in S$; also $l=\smvs d'(v)$.
Thus $h=\ell(s)-l>0$ by assumption, and $\Vxy\ne\nul$.

Suppose that $h>0$ and $\Vxy\ne\nul$ at some point in $\A$.  Consider the next
move in $\A$.  
To decrease $h$, either $g(u_i)$ must decrease for some $u_i\in V_{x,y}$, or
$l$ must increase.  If $g(u_i)$ decreases, then some cell $b_{i,r}$ in the
column over $u_i$ (between $c_x$ and $c_y$) becomes filled.  Since no vertex
of weight $0$ can lie between $c_x$ and $c_y$, we have $i\notin\{1,s\}$.

A chip that moves to $b_{i,r}$ must come from a leaf neighbor of $u_i$ with
weight $1$ or from $b_{i\pm 1,r'}$ with $r'<r$.  If the chip moves to $b_{i,r}$
from a leaf neighbor of $u_i$, then $g(u_i)$ decreases by $1$, and $l$
decreases by $1$, so $h$ and $\Vxy$ are unchanged.  If the chip moves to
$b_{i,r}$ from $b_{i\pm 1,r'}$, then let $u_{i'}$ be the vertex contributing
the chip.  By our restrictions on chip movement, the weight on $u_{i'}$ prior
to the move is $r'$, and $b_{i',r'}$ is empty after the move.
Also, since $r'\le\mu_i$, neither $c_x$ nor $c_y$ can be on $u_{i'}$.
This yields $i,i'\in\Vxy$, so $\sum_{u_i\in V_{x,y}}g_{u_i}-l$ is unchanged,
and $\Vxy$ and $h$ are both unchanged by this move.

Therefore a decrease in $h$ must come from $l$ increasing.  This requires
moving $c_x$ and $c_y$ away from each other.  If such movement does not enlarge
$\Vxy$, then $l$ does not increase and $h$ does not change.  Hence we may
assume that $c_y$ moves to the right from $u_j$ to $u_{j+1}$, adding $u_j$ to
$\Vxy$, with $u_{j}$ still having $l_j$ leaf neighbors with weight $1$.

Since $y$ began to the right of $S$, there was a most recent time when $c_y$
moved from $u_{j+1}$ to $u_{j}$.  At that point, $h$ grew by at least $l_j$,
since then $g(u_{j+1}) = g(u_i)=0$ and there were at least $l_j$ leaves of
$u_j$ with weight $1$.  This increase of $l_j$ has not been counted in any
of the moves analyzed above.  Furthermore, between that move and when $y$ moves
back, no chip from a leaf neighbor of $u_j$ can move into a cell in the pyramid
over $S$, because it would have to land above $c_y$, which is already outside
the pyramid.  Hence the bonus contribution of $l_j$ to $h$ persists until
$c_y$ moves away, keeping $h$ still positive after that move.

Hence $h$ remains positive and $\Vxy$ remains nonempty, as desired.
\end{proof}

\begin{theorem}
If $\sum_{v\in S}d'(v)\ge\ell(s)$ for all $S$ specified in
Theorem~\ref{thm:caterpillar}, then $\au(T)=1$.
\end{theorem}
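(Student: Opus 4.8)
The plan is to convert $T$ into an ascending tree and then invoke the Observation that ascending trees have unit acquisition number $1$. I would root the tree at a central internal spine vertex $v_a$ (one of the two central ones when $k$ is even), the precise choice being part of the argument. The target configuration is a strictly unimodal weight profile along the spine, $\wt(v_1)<\cdots<\wt(v_a)>\cdots>\wt(v_k)$, obtained by pushing chips onto the internal spine roughly according to the pyramid heights $\mu_j=\min\{j,k+1-j\}$ over $S=\{v_1,\dots,v_k\}$, while $v_0$, $v_{k+1}$, and all leaves of internal spine vertices are left with weight $0$ or $1$. (Since $v_0$ and $v_{k+1}$ are non-leaves of $T$, they force small additive corrections to the profile near its ends, but these do not affect the essential counting.) Any such strictly-unimodal-plus-light-leaves configuration is ascending with root $v_a$, so it remains only to show it is reachable by unit acquisition moves.

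Reaching the target means routing the leaf chips (together with the end chips $c_{v_0}$ and $c_{v_{k+1}}$) onto the internal spine, where $v_j$ must absorb about $\mu_j$ chips; inner blocks of the spine will import chips and end blocks will export them, the net demands being read off from the pyramid. The key structural fact is the one used in reverse in the necessity proof: because a chip's height in its stack strictly increases with every move, a chip originating outside a block $S$ of consecutive internal vertices can only come to rest \emph{above} the sub-pyramid over $S$, so the $\ell(|S|)$ cells of that sub-pyramid can be filled only by leaf chips belonging to $S$ itself. Modeling the routing as an assignment of leaf chips to pyramid cells in which a leaf chip at a leaf of $v_i$ may fill exactly the cells $b_{j,r}$ with $r\ge|i-j|+1$ (its ``upward cone''), the sub-pyramids are the critical cell sets, and a Hall-type (max-flow/min-cut) argument shows that the hypothesis $\sum_{v\in S}d'(v)\ge\ell(|S|)$ for every such $S$ is exactly what guarantees a complete assignment. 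So I would first establish, by this cut argument, the existence of a complete routing assignment.

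It then remains to realize the assignment by \emph{legal} moves. I would carry it out in rounds indexed by cell height: in round $r$, bring one further chip up to each internal spine vertex that still lies below its target, moving one edge at a time, while maintaining the invariant that the partially built configuration is already ascending toward $v_a$. This invariant makes every move legal --- a chip crossing $v_jv_{j+1}$ toward the apex lands on a vertex currently at least as heavy as its source --- and the cut conditions ensure that no vertex is ever forced to give up a chip it still needs, so the quantity pushed onward stays nonnegative. I expect this final step to be the main obstacle: one must preserve the routing feasibility and the ascending invariant simultaneously through the sweep, reconcile the two sweeps at $v_a$, and treat the parity of $\ell(|S|)$ exactly as reflected in the two pyramid shapes of Figure~\ref{fig:pyramid}. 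Choosing $a$ and running the Hall-type count, though not immediate, should be the easier half.
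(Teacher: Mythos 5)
Your first half is sound and matches the paper: the target configuration (pyramid profile $1+\mu_i$ on the internal spine, weight $1$ elsewhere, which is ascending) and the Hall-type argument that the hypothesis $\sum_{v\in S}d'(v)\ge\ell(|S|)$ over consecutive segments is exactly Hall's condition for assigning leaf chips to pyramid cells (a chip at a leaf of $v_i$ may serve $b_{j,r}$ only when $r>|i-j|$) are precisely the paper's setup and Claim~1. The genuine gap is in the second half, which you yourself flag as ``the main obstacle'': you never prove that an assignment produced by Hall's theorem can be realized by legal unit acquisition moves, and the sweep you propose --- rounds indexed by cell height, maintaining an ascending invariant --- does not work as stated. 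Concretely, suppose a chip from a leaf of $v_{j+2}$ is assigned to $b_{j,3}$, while $b_{j+2,1}$ and $b_{j+1,2}$ are assigned to other (short) routes. Your height-ordered sweep fills those two cells in rounds $1$ and $2$; when the long chip finally moves, it lands on $v_{j+2}$ at weight $3$, crosses to $v_{j+1}$ making weight $4$ there, and then cannot enter $v_j$, which has weight only $3$. The correct order is not by destination height but by a ``below'' relation on the chip routes, and for that relation to be a partial order one needs the routes to be non-crossing --- which an arbitrary Hall assignment need not be.

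The paper supplies exactly the missing ingredients: it takes a \emph{minimum-cost} perfect matching (cost $|i-j|$ on admissible pairs, $\infty$ otherwise), proves by an exchange argument (Claim~2) that minimality forces the corresponding grid segments to be non-crossing, and then schedules the moves by any linear extension of the resulting ``below'' order, which is what makes every individual move legal. So the existence half of your argument is fine, but without some substitute for the min-cost/uncrossing step (or an explicit uncrossing lemma converting an arbitrary feasible assignment into a non-crossing one) the realization step is an unproved claim, and it is the part of the theorem where essentially all the work lies.
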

\begin{proof}
Let $v_{i,m}$ denote the $m$th leaf neighbor of vertex $v_i$ along the spine,
for $1\le m\le d'(v_i)$.  Let $A$ be the set of leaf neighbors of $\VEC v1k$,
so $|A|\ge \ell(k)$.  Let $B$ be the set of cells in the pyramid over
$\{\VEC v1k\}$, so $|B|=\ell(k)$.

We specify edge costs $w$ for the complete bipartite graph with parts $A$ and
$B$, by
$$w(v_{j,m}b_{i,h})=\begin{cases}|i-j| & \text{if }h>|i-j|\\ \infty &\text{if }h\le|i-j|\end{cases}.$$
By assumption, $|A|\ge |B|$.  If $|A|>|B|$, vertices can be added to $B$ (with
zero cost on incident edges) until the parts have equal size.  Let $M$ be a
perfect matching of minimum cost in the resulting graph.  Such a matching can
be obtained by the Hungarian Algorithm~\cite{K,M}.

\bigskip
{\bf Claim 1:} {\it $M$ has finite cost.}
Let $H$ be the subgraph formed by the edges of positive finite cost.
We prove that $H$ contains a matching that covers $B$.  By Hall's
Theorem~\cite{Hall}, this holds if and only if $|N_H(X)|\ge|X|$ whenever
$X\esub B$.  If if fails, then let $X\subseteq B$ be a minimal set such that
$|N_H(X)|<|X|$.

Because $X$ is minimal, it follows that the subgraph of $H$ induced by $X$ and
$N_H(X)$ is connected.  For $b_{i,h}\in B$, by definition $N_H(b)$ is the set
of leaf neighbors of the spine vertices $v_j$ such that $|i-j|<h$; these spine
vertices are consecutive along the spine.  If the union of these segments for
the vertices of $X$ is not a single consecutive segment, then again $X$ is
not a minimal failure of Hall's Condition.  Thus $N(X)=\SE jab d'(v_j)$ for
some $a$ and $b$.

By definition, all leaf neighbors of $v_i$ are in $N_H(b_{i,h})$.  Hence
each element of $X$ lies in the pyramid over $\{v_a,\ldots,v_b\}$.  Thus
$|X|\le\ell(b-a+1)$.
We now have $|N(X)|=\SE jab d'(v_j)\ge\ell(b-a+1)\ge|X|$.
Thus Hall's Condition is satisfied, and $M$ has finite cost.

\bigskip
We use $M$ to specify a protocol that converts $T$ into an ascending tree.
We fill all cells in the pyramid by moving to each cell the chip on the leaf
vertex matched to it in $M$.  This places weight $1+\mu_i$ on $v_i$ for
$1\le i\le k$.  Vertices $v_0$ and $v_{k+1}$ retain their original chips and
weight $1$.  Leaves not matched by edges with positive cost in $M$ also retain
weight $1$ in $T$.  Hence this new distribution $T'$ is an ascending tree, with
unit acquisition number $1$.

The chip from the leaf $v_{j,m}$ matched to cell $b_{i,h}$ will move $|i-j|$
steps along the spine to reach the assigned destination $v_i$, but we still
must determine a feasible order for these moves to occur in a protocal.
We begin by representing the desired moves on a grid.  Place a filled circle
for $b_{i,h}$ at the lattice point $(i,h)$.  For an edge $v_{j,m}b_{i,h}$ in
$M$, draw a line segment from $(j,h-|i-j|)$ upward along a diagonal to the
circle at $(i,h)$.  When $i=j$, the segment has length $0$ (see
Figure~\ref{fig:matcttoacqmoves}).  Each circle corresponding to a cell in the
pyramid is the top end of one segment, and the slope of each segment is $\pm1$.  

\begin{figure}[hbt]
\centering
\begin{tikzpicture}
\foreach \x in {1,...,7}
	\fill (\x,0) circle (0.15);
\foreach \x in {2,...,6}
	\fill (\x,1) circle (0.15);
\foreach \x in {3,...,5}
	\fill (\x,2) circle (0.15);
\foreach \x in {4,...,4}
	\fill (\x,3) circle (0.15);
\draw [line width=2] (1,0) -- (2,1);
\draw [line width=2] (3,1) -- (4,0) -- (5,1);
\draw [line width=2] (4,1) -- (3,2);
\draw [line width=2] (7,0) -- (4,3);
\end{tikzpicture}
\caption{A matching being converted to a protocol.\label{fig:matcttoacqmoves}}
\end{figure}
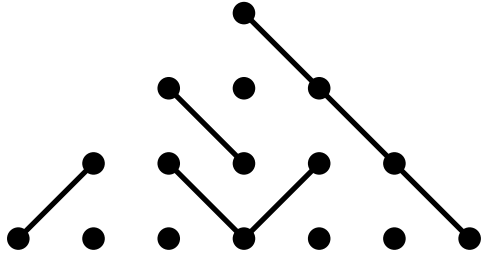

These line segments may overlap (one may contain another), but they do not
cross.

\bigskip
{\bf Claim 2:} {\it Two non-collinear segments in the grid diagram cannot
share a lattice point $(a,b)$ if one extends above $(a,b)$ and the other
extends below $(a,b)$.}
If the segment $L$ extending above contains $(a-1,b+1)$ and the segment $L'$
extending below contains $(a-1,b-1)$, then $M$ contains edges $v_{j,m}b_{i,h}$
and $v_{j',m'}b_{i',h'}$ such that $j>a\ge i$ and $j'<a\le i'$.
These edges have cost $i-j$ and $i'-j'$, but since $j'<j$ and $i\le i'$, we
have $|i-j'|+|i'-j|<(i-j)+(i'-j')$.  Hence replacing these edges in $M$ with
the edges $v_{j,m}b_{i',h'}$ and $v_{j',m'}b_{i,h}$ yields a matching $M'$
with smaller cost, contradicting the minimality of $M$.

Similarly, if the segment $L$ extending above contains $(a+1,b+a)$ and the
segment $L'$ extending below contains $(a+1,b-1)$, then $M$ contains edges
$v_{j,m}b_{i,h}$ and $v_{j',m'}b_{i',h'}$ such that $j<a\le i$ and
$j'>a\ge i'$, and the edges $v_{j,m}b_{i',h'}$ and $v_{j',m'}b_{i,h}$
have smaller total cost.

\bigskip
Finally, we convert the grid diagram to a protocol.  We have $\ell(s)$ segments
reaching the points for cells in the pyramid; they may have length $0$.
A segment $D$ is \textit{below} a segment $D'$ if $D$ and $D'$ contain points
with the same horizontal coordinate such that the point in $D$ is vertically
below the point in $D'$, or if $D$ and $D'$ are collinear and the destination
cell of $D$ has a smaller vertical coordinate than that of $D'$.  By Claim 2,
no two distinct segments can be below each other.  Linearly order the cells of
the pyramid by iteratively taking a remaining cell $b_{i,h}$ such that no
remaining cell has its segment below that of $b_{i,h}$.

%

The resulting linear order on the points is the order in which we fill the
cells to obtain the ascending tree described earlier.  When $b_{i,h}$ is to
be filled, and $v_{j,m}b_{i,h}$ is the edge incident to it in $M$, the chip
from the $m$th leaf neighbor of $v_j$ is moved to $v_i$.  Since all segments
below the edge for this segment have been processed, and this segment is not
below any edge that has been processed, the current weights on the spine
vertices from $v_j$ to $v_i$ permit this chip to move as desired.
Hence we convert $T$ to an ascending tree, and $\au(T)=1$.
\end{proof}

Theorem~\ref{thm:caterpillar} yields a linear-time algorithm for $\au(T)$ on
caterpillars.

\begin{corollary}
There is a $O(|V(T)|)$-time algorithm that determines the unit acquisition number of a caterpillar $T$.
\end{corollary}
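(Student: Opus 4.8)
The plan is to show that the characterization in Theorem~\ref{thm:caterpillar} can be checked in linear time, and that when $\au(T)>1$ the exact value can also be extracted in linear time by a greedy decomposition of the spine. First I would compute, in one pass over $T$, the spine $v_0,\ldots,v_{k+1}$ and the values $d'(v_i)$ for each internal spine vertex, together with the prefix sums $D(j)=\sum_{i=1}^{j}d'(v_i)$; this takes $O(|V(T)|)$ time since a caterpillar has $|V(T)|=k+\sum d'(v_i)+2$ vertices (counting $v_0,v_{k+1}$ as leaf-free by the definition). The test ``$\au(T)=1$?'' asks whether $\sum_{v\in S}d'(v)\ge\ell(s)$ for every window $S$ of $s$ consecutive internal vertices and every $s$ with $1\le s\le k$. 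Naively this is $O(k^2)$ windows; the key observation is that, because $\ell(s)\approx s^2/4$ grows quadratically while each $d'(v_i)$ contributes only linearly, only ``short enough'' windows can ever fail, and more usefully one can reduce to checking, for each right endpoint, the single most dangerous left endpoint. I would make this precise by the standard sliding-window / two-pointer technique: maintain a pointer so that for each $j$ we track the minimal starting index $a$ for which $\sum_{i=a}^{j}d'(v_i)\ge\ell(j-a+1)$ still needs verification; using the prefix sums $D$, each window test is $O(1)$, and the amortized total is $O(k)$.

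The cleaner route, which I would actually write up, avoids window-by-window reasoning entirely by appealing to the proof of sufficiency: the condition for a set of consecutive internal vertices to support an ascending-tree protocol filling a pyramid is exactly a Hall-type condition, and the ``worst'' violations are intervals. So it suffices to verify $\sum_{i=a}^{b}d'(v_i)\ge\ell(b-a+1)$ for all intervals $[a,b]\subseteq[1,k]$, and by the monotonicity of $\ell$ one checks that if every interval of length up to some threshold is fine then all longer intervals are automatically fine (once $\ell(s)$ exceeds $\sum_{i=1}^{k}d'(v_i)$, no window of length $s$ can be present anyway since there are only $k$ internal vertices). Concretely: for each $b$ from $1$ to $k$, let $f(b)=\min_{1\le a\le b}\bigl(D(b)-D(a-1)-\ell(b-a+1)\bigr)$; the graph has $\au=1$ iff $f(b)\ge0$ for all $b$. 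The function inside the min is concave-ish in $a$ for fixed $b$ (the $\ell$ term is convex), so its minimum over $a$ is attained at an endpoint-type index, and a single left-to-right scan maintaining the relevant candidate suffices for $O(k)$ total work.

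For the general value $\au(T)$ when it exceeds $1$: I would decompose the spine greedily into maximal blocks of consecutive internal vertices each of which (together with one extra flanking vertex acting as a local root) induces a sub-caterpillar with unit acquisition number $1$, invoke Theorem~\ref{thm:caterpillar} on each block to certify this, and show the number of blocks is $\au(T)$ — the upper bound by running the single-block protocols independently and then merging the resulting heavy vertices along the spine via an ascending argument, and the lower bound by a cut argument in the spirit of Lemma~\ref{cut} applied to the leaf neighbors of the block-boundary spine vertices. Finding the maximal blocks is again a two-pointer scan over the prefix sums $D$, so the whole computation is $O(|V(T)|)$.

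The main obstacle I anticipate is the lower bound $\au(T)\ge(\text{number of greedy blocks})$: one must argue that no protocol can do better than the greedy partition, i.e., that the places where the pyramid condition first fails genuinely force an extra surviving vertex. This is essentially a refinement of the necessity proof of Theorem~\ref{thm:caterpillar} — tracking the potential $h=\sum g(u_i)-l$ not just for two distinguished chips but showing that whenever a maximal good block ends, some chip is "trapped" — and getting the bookkeeping to line up exactly with the greedy count is the delicate part; everything else (the linear-time scans, the Hall-condition reduction to intervals) is routine once the prefix-sum data structure is in place.
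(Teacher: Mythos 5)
Your overall plan---greedy left-to-right extraction of maximal sub-caterpillars with unit acquisition number $1$, certified by Theorem~\ref{thm:caterpillar}, with the number of blocks equal to $\au(T)$---is the same as the paper's, but the two steps that carry the real content are left as gaps. First, the lower bound $\au(T)\ge(\hbox{number of greedy blocks})$, which you yourself flag as the main obstacle, is never actually argued; the cut argument ``in the spirit of Lemma~\ref{cut}'' does not apply, since the spine vertices at block boundaries need not form degree-$2$ cuts avoiding the relevant neighborhoods. The paper's resolution is different and much cleaner: an optimal protocol gathers the weight at $\au(T)$ vertices, each drawing its weight from a caterpillar subtree, so $V(T)$ is partitioned into $\au(T)$ caterpillars; if this were fewer than the greedy count, some caterpillar $T^*$ of the optimal partition would properly contain a greedy block $T'$ extended on \emph{both} ends, and then necessity of the pyramid condition for $T^*$ plus its heredity to the right-extension of $T'$, combined with sufficiency, contradicts the greedy maximality of $T'$. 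No new potential-function or ``trapped chip'' refinement of the necessity proof is needed.

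Second, your linear-time verification of the window condition is not established. The ``cleaner route'' asserts that $D(b)-D(a-1)-\ell(b-a+1)$ is minimized over $a$ at an endpoint-type index by convexity, but this function is a sum of a term decreasing in $a$ and a term increasing in $a$, so its minimum can sit strictly inside the range; the two-pointer claim of amortized $O(k)$ is likewise unproved. The paper does not reduce the number of window tests at all: it simply observes that while growing a block whose prefix satisfies the condition, the number of tests performed is quadratic in the spine length examined, but the condition itself guarantees $\Omega(s^2)$ leaves hanging from that spine segment, so the work is linear in the number of vertices collected by that block, and hence linear overall. You state exactly this quadratic-versus-quadratic intuition in passing and then abandon it; making it the actual accounting, rather than the unjustified convexity shortcut, is what closes the running-time argument. (You also omit the boundary adjustments where a spine vertex of $T$ must be treated as a leaf of the extracted block when end spine vertices lack leaf neighbors, which the paper needs to make the greedy extraction well defined.)
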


\begin{proof}
Let $T$ be a caterpillar, drawn with the spine in order from left to right. 
The algorithm iteratively removes the largest caterpillar subtree having unit
acquisition number $1$ that contains the left end of the remaining spine.
This caterpillar is determine by applying the condition in
Theorem~\ref{thm:caterpillar}.

When removing a subtree $T'$, it may be necessary to count a member of the
spine of $T$ belonging to $T'$ as a leaf of $T'$ in some cases: 1) the leftmost
member of $T_i$ has no leaf neighbors, 2) the rightmost member of $T_i$ has no
leaf neighbors, or 3) the two rightmost members of $T_i$ have no leaf neighbors
(see Figure~\ref{Fig:catadjust}).

\begin{figure}[hbt]
\begin{center}
\begin{tikzpicture}
\node at (-2,.5) {$1$)};
\fill (0,1) circle (0.15);
\fill (1,1) circle (0.15);
\fill (.7,0) circle (0.15);
\fill (1.3,0) circle (0.15);
\draw [dotted, line width=1.5] (-.5,1)--(0,1);
\draw [line width=1.5] (0,1)--(2,1);
\draw [line width=1.5] (.7,0)--(1,1)--(1.3,0);

\fill (6,0) circle (0.15);
\fill (6,1) circle (0.15);
\fill (5.6,0) circle (0.15);
\fill (6.4,0) circle (0.15);
\draw [line width=1.5] (6,0)--(6,1)--(7,1);
\draw [line width=1.5] (5.6,0)--(6,1)--(6.4,0);

\node at (-2,-1.5) {$2$)};
\fill (0,-1) circle (0.15);
\fill (-.3,-2) circle (0.15);
\fill (.3,-2) circle (0.15);
\fill (1,-1) circle (0.15);
\fill (2,-1) circle (0.15);
\fill (1.7,-2) circle (0.15);
\fill (2.3,-2) circle (0.15);
\draw [line width=1.5] (-1,-1)--(3,-1);
\draw [line width=1.5] (-.3,-2)--(0,-1)--(.3,-2);
\draw [line width=1.5] (1.7,-2)--(2,-1)--(2.3,-2);

\fill (6,-1) circle (0.15);
\fill (5.6,-2) circle (0.15);
\fill (6.4,-2) circle (0.15);
\fill (6,-2) circle (0.15);
\fill (8,-1) circle (0.15);
\fill (7.7,-2) circle (0.15);
\fill (8.3,-2) circle (0.15);
\draw [line width=1.5] (5,-1)--(6,-1)--(6,-2);
\draw [line width=1.5] (5.6,-2)--(6,-1)--(6.4,-2);
\draw [line width=1.5] (7.7,-2)--(8,-1)--(8.3,-2);
\draw [line width=1.5] (8,-1)--(9,-1);

\node at (-2,-3.5) {$3$)};
\fill (0,-3) circle (0.15);
\fill (-.3,-4) circle (0.15);
\fill (.3,-4) circle (0.15);
\fill (1,-3) circle (0.15);
\fill (2,-3) circle (0.15);
\fill (3,-3) circle (0.15);
\fill (2.7,-4) circle (0.15);
\fill (3.3,-4) circle (0.15);
\draw [line width=1.5] (-1,-3)--(4,-3);
\draw [line width=1.5] (-.3,-4)--(0,-3)--(.3,-4);
\draw [line width=1.5] (2.7,-4)--(3,-3)--(3.3,-4);

\fill (6,-3) circle (0.15);
\fill (5.7,-4) circle (0.15);
\fill (6.3,-4) circle (0.15);
\fill (7,-3) circle (0.15);
\fill (7,-4) circle (0.15);
\fill (9,-3) circle (0.15);
\fill (8.7,-4) circle (0.15);
\fill (9.3,-4) circle (0.15);
\draw [line width=1.5] (5,-3)--(7,-3)--(7,-4);
\draw [line width=1.5] (9,-3)--(10,-3);
\draw [line width=1.5] (5.7,-4)--(6,-3)--(6.3,-4);
\draw [line width=1.5] (8.7,-4)--(9,-3)--(9.3,-4);
\end{tikzpicture}
\caption{Situations where a spine vertex is treated as a leaf in a subtree.}
\label{Fig:catadjust}
\end{center}
\end{figure}
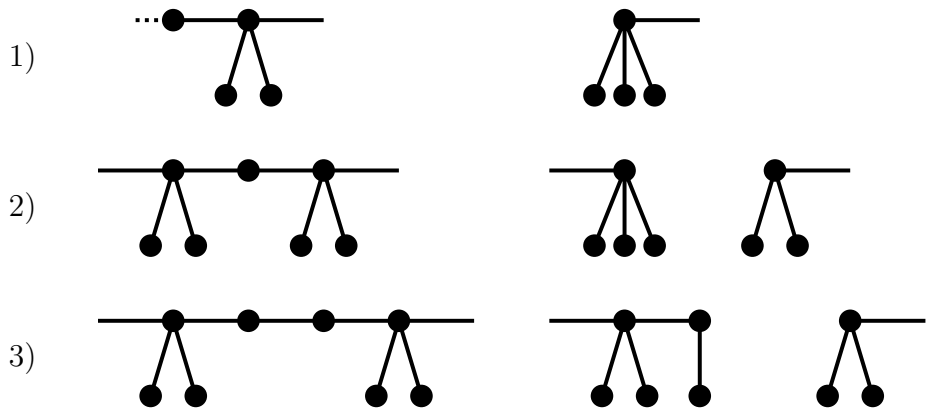

The algorithm partitions the vertex set of $T$ into set inducing caterpillars
with unit acquisition number $1$, so it provides an upper bound for $\au(T)$.
If $\au(T)$ is less than the bound provided by the algorithm, then an optimal
protocol collects all the weight at $\au(T)$ vertices.  The weight on each one
of these vertices comes from a subtree of $T$ that is a caterpillar.  Since
this partitions $V(T)$ into fewer sets inducing caterpillars, and the spines
of these caterpillars and those found by the algorithm, some caterpillar $T^*$
used by the optimal protocol properly contains some caterpillar $T'$ found by
the algorithm, extending farther on {\it both} ends.  Now the necessity of the
condition in Theorem~\ref{thm:caterpillar} for $T^*$ implies that the condition
also holds for the extension of $T'$ to the right end of $T^*$, and then
sufficiency of the condition contradicts the choice of $T'$ to be a largest
caterpillar from its left end.

To run the algorithm, we first compute the values $d'(v)$ for vertices along
the spine, in time linear in $|V(T)|$.  To understand the subsequent running
time, consider the extraction of the first caterpillar.  The condition of
Theorem~\ref{thm:caterpillar} must be checked.  Since the condition is
specified over all segments of internal vertices of the subtree, it also holds
for all sub-segments.  That is, we grow the potential caterpillar from the left.
If the condition holds for the first $i-1$ internal vertices of the spine, then
we next test the segments ending at the $i$th internal vertex.  At the point
where the test first fails, the number of sums that have been tested is
quadratic in the length of the spine examined so far, but the number of leaves
adjacent to those spine vertices is also quadratic in that length.  Thus, the
number of sums performed is linear in the number of vertices that are collected
by the first caterpillar.  Since this holds for each subcaterpillar, the 
entire algorithm runs in time linear in $|V(T)|$.
\end{proof}

\section{Diameter $2$}\label{Sec:diam2}

In this section we prove $\au(G)\le2$ when $G$ has diameter $2$, with equality
possible only for $C_5$ and the Petersen graph.  It remains open whether this
also holds for $\at(G)$.

We begin with a lemma.  A {\it solo-neighbor} of a vertex $u$ in a clique $Q$
with at least two vertices is a vertex outside $Q$ whose only neighbor in $Q$
is $u$.

\begin{lemma}\label{NoSoloNbr}
Let $G$ be a graph with diameter $2$.  If $Q$ is a clique in $G$ with at least
two vertices, and there exists $u\in Q$ such that $u$ has no solo-neighbor,
then $a_{u}(G) = 1$.
\end{lemma}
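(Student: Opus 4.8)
The plan is to exhibit an explicit protocol showing $\au(G) = 1$ under the hypothesis. We are given a clique $Q$ with $|Q| \ge 2$ and a vertex $u \in Q$ such that every vertex outside $Q$ adjacent to $u$ is also adjacent to some other vertex of $Q$. The natural idea is to accumulate all weight first onto $Q$, then consolidate within $Q$ (which is easy since $Q$ is a clique and all weights inside can be balanced), and finally bring in the last stragglers. The role of $u$ having no solo-neighbor should be exactly what lets us clean up the vertices at distance $1$ from $u$ that are not yet handled.

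**Key steps.** First I would pull in every vertex $w \notin Q$: since $G$ has diameter $2$, either $w$ is adjacent to some vertex of $Q$, or $w$ is at distance $2$ from all of $Q$ and has a common neighbor with some $q \in Q$. In the first case, route $c_w$ directly to its neighbor in $Q$; in the second case, $w$ has a neighbor $x$ adjacent to some $q \in Q$, and we can try to push $c_w$ to $x$ and then to $q$. The obstacle here is the weight condition on unit acquisition moves: to move $c_w$ to $x$ we need $\wt(x) \ge \wt(w)$ at that moment. I would handle this by being careful about the order — process vertices so that the target already carries enough accumulated weight, building up $Q$'s vertices first to weight $\ge 2$ by absorbing their own private neighbors, and using the clique structure of $Q$ to rebalance after each absorption so no vertex of $Q$ ever lags behind. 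Once all weight outside $Q$ that can easily be collected is on $Q$, shuffle weight within the clique $Q$ so that $u$ itself holds a large amount (at least as much as any remaining outside chip). Then any remaining vertex $w$ adjacent to $u$ can send its chip to $u$; and a remaining vertex $w$ adjacent to $Q$ but not to $u$ sends its chip to its neighbor $q \in Q$, which then (inside the clique) forwards it to $u$. The no-solo-neighbor hypothesis guarantees that every neighbor of $u$ outside $Q$ has an alternative route into $Q$, so we never get stuck needing to move a chip off of $u$ (which would destroy the consolidation) to service such a vertex.

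**Main obstacle.** The delicate point, as usual in these arguments, is the monotonicity constraint on moves: a chip can only move toward a vertex that is weakly heavier. The clique $Q$ is the engine that makes this work — within a clique one can always equalize weights and then dump everything onto a chosen vertex — so the real content is showing that every vertex of $G$ can feed its chip into $Q$ in some order consistent with the weight rule. I expect the argument to proceed by: (i) noting every vertex of $G \setminus Q$ has a neighbor in $Q$ or a neighbor-of-a-neighbor in $Q$ (diameter $2$); (ii) choosing two vertices $q_1, q_2 \in Q$ and first loading $q_1$ via its own neighbors and via $u$'s neighbors until $\wt(q_1)$ dominates; (iii) handling distance-$2$ vertices by using the fact that their intermediate neighbor, once it holds weight $\ge 2$, can relay; and (iv) invoking the no-solo-neighbor condition precisely at the step where a neighbor $w$ of $u$ would otherwise have no legal path into $Q$ without first unloading $u$. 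I would then conclude that all $n$ chips reach a single vertex of $Q$, so $\au(G) = 1$. The cleanest write-up probably fixes $q_1 = u$ itself when convenient, absorbs $u$'s clique-neighbors and private neighbors early, and treats the general outside vertex last; the case split on whether an outside vertex is adjacent to $Q$, adjacent only through a distance-$2$ path, or is a non-solo neighbor of $u$ organizes the whole proof.
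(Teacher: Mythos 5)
Your plan has a genuine gap: the two load-bearing claims --- that you can boost the vertices of $Q$ to weight at least $2$ and ``rebalance after each absorption so no vertex of $Q$ ever lags behind,'' and that consequently every outside vertex can feed its chip into $Q$ in some legal order --- are exactly the content of the lemma, and they are asserted rather than proved. As stated they can even fail: vertices of $Q$ other than $u$ may have no private neighbors (so there is not enough weight to hold all of $Q$ at weight $\ge 2$), and consolidating inside the clique necessarily drives some clique vertices to weight $0$, which can strand outside vertices that were relying on them as entry points; in particular your relay step for a vertex $w$ with no neighbor in $Q$ needs, at the moment the intermediate $x$ holds $c_w$, a clique neighbor $q$ of $x$ with $\wt(q)\ge \wt(x)\ge 2$, and no ordering guaranteeing this is exhibited. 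Moreover, the place where the no-solo-neighbor hypothesis enters is never pinned down: in your scheme the weight accumulates at $u$ (or a fixed $q_1$), which is never unloaded, so the stated use of the hypothesis (``we never get stuck needing to move a chip off of $u$'') does not attach to any concrete step of the protocol. Note that the hypothesis cannot be dispensable --- in $C_5$ every edge is a clique in which both endpoints have solo-neighbors, and $\au(C_5)=2$ --- so a correct proof must use it at an identifiable point.

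The intended argument is far shorter and avoids all of this bookkeeping: pick any other vertex $v\in Q$ and make the single move of $c_u$ to $v$, so $\wt(v)=2$, $\wt(u)=0$, and every other vertex has weight $1$. The hypothesis guarantees that every vertex has a path of length at most $2$ to $v$ avoiding $u$: vertices of $Q$ are adjacent to $v$; a neighbor of $u$ outside $Q$ has, by the no-solo-neighbor assumption, another neighbor $q\in Q$, and $q$ is adjacent to $v$; and any vertex not adjacent to $u$ has its $2$-path to $v$ automatically avoiding $u$. Taking a breadth-first spanning tree from $v$ along such paths gives a spanning ascending tree (root of weight $2$, all other positive weights equal to $1$ at depths $1$ and $2$), so all remaining weight funnels to $v$ and $\au(G)=1$. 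Your accumulation-onto-$Q$ strategy is attacking a harder problem than the lemma requires, and the hard part of that harder problem is left unresolved.
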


\begin{proof}
Given such a vertex $u$, let $v$ be another vertex in $Q$.  Since $u$ has no
solo-neighbor relative to $Q$, every vertex has a path of length at most $2$
to $v$ that does not pass through $u$.  Thus moving $c_u$ to $v$ converts
$G$ to a graph with a spanning ascending tree.
\end{proof}

The {\it girth} of a graph $G$ having a cycle is the minimum length of a cycle
in $G$.

\begin{theorem}
If $G$ is a graph with diameter $2$, then $\au(G)\le2$.  Equality can only
hold for $C_5$ or possibly the Petersen graph.  Otherwise, $a_{u}(G) = 1$.
\end{theorem}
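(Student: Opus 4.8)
The strategy is to reduce the general diameter-$2$ case to a situation governed by Lemma~\ref{NoSoloNbr}, and then handle the (rare) graphs where that lemma does not apply directly. Since edges are cliques with two vertices, Lemma~\ref{NoSoloNbr} already gives $\au(G)=1$ whenever some edge $uv$ has the property that $u$ has no solo-neighbor relative to $\{u,v\}$; that is, every neighbor of $u$ other than $v$ is also a neighbor of $v$, i.e.\ $N(u)\setminus\{v\}\subseteq N(v)$. So the plan is: first suppose no such ``dominating'' configuration exists, meaning that for every edge $uv$ each endpoint has a private neighbor (a vertex adjacent to it but not to the other endpoint). I would then look at a maximal clique $Q$ of maximum size and analyze the solo-neighbor structure. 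If $|Q|\ge 3$, I expect to argue that one can first perform a few moves to eliminate solo-neighbors of some vertex $u\in Q$ — a solo-neighbor $w$ of $u$ has a common neighbor with every vertex of $G$ (diameter $2$), and since $w$'s only neighbor in $Q$ is $u$, moving $c_w$ onto $u$ (after possibly first feeding $u$ a chip so the weight inequality holds, using another vertex of $Q$) makes $w$ irrelevant; iterating over all solo-neighbors of $u$, and using that $|Q|\ge 3$ gives spare vertices in $Q$ to do the preliminary weight-raising, we arrive at a configuration where Lemma~\ref{NoSoloNbr} applies to the remaining graph, forcing all remaining weight to a single vertex. This should dispatch everything with a triangle, i.e.\ girth $3$.

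**Triangle-free case.** It remains to treat triangle-free graphs of diameter $2$. Here I would invoke the classical fact (Hoffman–Singleton style / Moore bound reasoning) that a triangle-free graph of diameter $2$ is either (i) a star $K_{1,n-1}$, (ii) $C_4$ or $C_5$, or (iii) a Moore graph of diameter $2$ and girth $5$ — and the only such Moore graphs are the Petersen graph, the Hoffman–Singleton graph, and possibly one of degree $57$. Actually I only need the weaker structural statement: a triangle-free diameter-$2$ graph that is not a star is regular of some degree $k$ with $n=k^2+1$ and girth exactly $5$ — or is $C_4$. For stars, $\au=1$ trivially (move all leaves to the center). For $C_4$, $\au=1$ (easy check, or diameter-$2$ with a dominating edge). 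For the $(k^2+1)$-vertex Moore graphs with $k\ge 3$, I would give a direct protocol showing $\au=1$: pick a vertex $v$, move its $k$ chips from its neighbors... wait, that only works if the neighbors can first acquire — so instead pick an edge $uv$; $u$ and $v$ have no common neighbor (girth $5$), so move $c_u$ to $v$ (weights equal), giving $v$ weight $2$; now every vertex of $G$ lies within distance $2$ of $v$ via a path avoiding $u$ unless its unique length-$2$ route to $v$ passed through $u$, i.e.\ unless it is the private neighbor of $u$ on the $uv$-side; one then routes those few remaining chips in using the high girth. The point is that for $k\ge 3$ there is enough room; only $k=2$, i.e.\ $C_5$, and $k=3$, i.e.\ Petersen, resist, and Petersen is left as the genuine exceptional (or open) case.

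**Main obstacle.** The delicate part is the triangle-free Moore-graph subcase: showing $\au=1$ for the Hoffman–Singleton graph and (if possible) ruling it out as an exception, while being honest that Petersen may or may not have $\au=2$. Concretely, after moving $c_u\to v$ the troublesome vertices are the $k-1$ private neighbors of $u$ (those whose only distance-$2$ path to $v$ went through $u$); each such vertex $a$ has weight $1$ and needs to deliver its chip to $v$, but its neighbors all have weight $1$ too, so one must first engineer a neighbor of $a$ to reach weight $\ge 1$ from some already-collected chip — this requires carefully sequencing which private neighbors feed which, and it is exactly here that $k\ge 4$ (or a clever argument at $k=3$) is needed. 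I expect the write-up to handle $k\ge 4$ cleanly and to verify $k=3$ (Petersen) separately by a short case analysis or by conceding it. A secondary, more routine obstacle is making the preliminary weight-raising moves in the girth-$3$ case fully legal (ensuring at each solo-neighbor elimination the receiving vertex in $Q$ genuinely has enough weight), but with $|Q|\ge 3$ there is always a third clique vertex to borrow from, so this is bookkeeping rather than a real difficulty.
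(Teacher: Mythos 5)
Your case split has a fatal hole: the statement you invoke for triangle-free graphs of diameter $2$ is false. It is not true that such a graph must be a star, $C_4$, $C_5$, or a girth-$5$ Moore graph: every complete bipartite graph $K_{m,n}$ with $m,n\ge 2$ (e.g.\ $K_{3,3}$) is triangle-free, has diameter $2$ and girth $4$, and is neither a star nor a Moore graph, and there are many irregular girth-$4$ examples as well. The Moore-bound rigidity you are thinking of (regular, $n=k^2+1$, hence $C_5$, Petersen, Hoffman--Singleton, possibly $k=57$) requires girth at least $5$, not merely triangle-freeness. Consequently your proposal never treats graphs of girth $4$ at all, whereas the paper devotes a separate case to them: pick a $4$-cycle $w,x,y,z$, move $c_w$ to $x$ and $c_z$ to $y$, and use triangle-freeness plus diameter $2$ to show all weight lies in two disjoint ascending trees rooted at the adjacent vertices $x$ and $y$, which are then merged along the edge $xy$.

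The two remaining cases are also not actually established. In the girth-$3$ case, ``empty the solo-neighbors of $u$ onto $u$ and then apply Lemma~\ref{NoSoloNbr} to the remaining graph'' does not work as stated: the lemma is a statement about the all-ones configuration on a diameter-$2$ graph, and after the preliminary moves the weights are nonuniform and some vertices may have every length-$\le 2$ route to the intended collection vertex passing through a now weight-$0$ solo-neighbor (deleting those vertices can destroy diameter $2$). Deciding \emph{where} the weight should accumulate is the real content, not bookkeeping; the paper resolves it by choosing three clique vertices $u,v,w$, funnelling the solo-neighbor chips of $u$ through $u$ into $v$, moving $c_u$ to $w$, and using diameter $2$ to rule out a ``bad'' vertex whose neighborhood lies entirely among the emptied solo-neighbors, so that an ascending tree rooted at $v$ survives. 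In the girth-$5$ case you concede the sequencing difficulty for the private neighbors of $u$ and never produce a protocol; the paper's argument is different and explicit (give a $5$-cycle vertex $x$ weight $3$ from two of its outside neighbors, use an edge between $W$ and $Y$ to raise $w$ and $y$ to weight $2$, and check via girth $5$ and $\delta(G)\ge 4$ that everything ascends to $x$). Finally, even ``conceding'' the Petersen graph still requires proving $\au(P)\le 2$, which your write-up omits; the paper gets it by shifting weight along a perfect matching onto a $5$-cycle and using $\au(C_5)=2$.
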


\begin{proof}
A graph with diameter $2$ that is not a star has girth at most $5$.  We
consider cases.

\textbf{Case 1:}  {\it $G$ is $C_{5}$ or the Petersen graph.}
Proposition~\ref{partpath} shows that $\au(C_{5}) =2$.  Consider the Petersen
graph $P$.  For the upper bound, move weight along a perfect matching onto a
$5$-cycle, with weight $2$ at each vertex.  Now $\au(C_5)=2$ suffices.

In a tree $T$ with $\au(T)=1$, every non-leaf vertex $v$ must have a leaf
neighbor; otherwise, the first move that involves $v$ leaves a cut-vertex with
weight $0$ separating chips.  A short case analysis shows that the only such
$10$-vertex tree with maximum degree at most $3$ is that in
Figure~\ref{fig:tree}.  This tree satisfies Theorem~\ref{thm:caterpillar} but
is not contained in $P$.

Therefore, if $\au(P)=1$, then the edges used by acquisition moves include a
cycle.  A lengthy case analysis shows that in such a protocol also cannot
move all weight to a single vertex.  Thus $\au(P)=2$ is provable, but we omit
this analysis.


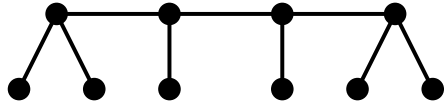
\begin{figure}[hbt]
\begin{center}

\begin{tikzpicture}
\fill (0,0) circle (0.15);
\fill (1,0) circle (0.15);
\fill (.5,1) circle (0.15);
\fill (2,1) circle (0.15);
\fill (2,0) circle (0.15);
\fill (3.5,1) circle (0.15);
\fill (3.5,0) circle (0.15);
\fill (5,1) circle (0.15);
\fill (4.5,0) circle (0.15);
\fill (5.5,0) circle (0.15);

\draw [line width=1.5] (0,0) -- (.5,1) -- (5,1) -- (5.5,0);
\draw [line width=1.5] (1,0) -- (.5,1);
\draw [line width=1.5] (2,0) -- (2,1);
\draw [line width=1.5] (3.5,0) -- (3.5,1);
\draw [line width=1.5] (4.5,0) -- (5,1);
\end{tikzpicture}

\caption{The only $10$-vertex tree $T$ with maximum degree $3$ such that
$\au(T)=1$.}\label{fig:tree}
\end{center}
\end{figure}

\textbf{Case 2:}  {\it $G$ has girth $3$.}
Let $Q$ be a maximum clique in $G$, so $|Q|\geq 3$.
Let $S$ be the set of vertices outside $Q$ having a neighbor in $Q$,
and let $U=V(G)-Q-S$.  If $U=\nul$, then $\au(G) = 1$.  If some vertex $u$ in
$Q$ has no solo-neighbor, then Lemma~\ref{NoSoloNbr} yields $\au(G) = 1$.

In the remaining case, $U\ne\nul$, and every vertex of $Q$ has a solo-neighbor.
Choose $u,v,w\in Q$.  Let $S'$ be the set of solo neighbors of vertices
in $Q-\{w\}$.  If $N_G(z)\esub S'$ for some $z\in U$, then $w$ has distance $3$
from $z$.  Hence $U$ has no such {\it bad} vertex.

Begin by moving all weight from solo-neighbors of $v$ to $v$.  Next move
all weight from solo-neighbors of $u$ to $v$, two moves per chip; note that
now $\wt(v)\ge3$.  Next move $c_u$ to $w$, and let each vertex of $Q-\{u,v,w\}$
acquire the chip from one of its solo-neighbors.  Now $\wt(v)\ge3$, all
vertices of $Q-\{u,v\}$ have weight $2$, other vertices retaining positive
weight have weight $1$, and every vertex with weight $1$ has a neighbor in
$Q-\{u\}$ or a neighbor of weight $1$ with a neighbor in $Q-\{u\}$ (because $U$
has no bad vertex).  Hence positive weight remains only on an ascending tree
rooted at $v$, and $\au(G)=1$.

\textbf{Case 3:} {\it $G$ has girth $4$.}
Let $w,x,y,z$ be the vertices of a $4$-cycle in order.  Move $c_w$ to $x$ and
$c_z$ to $y$.  We claim that all the weight is now contained in disjoint
ascending trees $T_x$ and $T_y$ to $x$ and $y$.  If so, then $x$ and $y$ can
acquire all the weight, after which it can be combined along the edge $xy$.

Let $X=N_G(x)-\{y,w\}$ and $Y=N_G(y)-\{x,z\}$.  The sets $X$ and $Y$ are
disjoint.  Let $u$ be a vertex with weight $1$ not in $X\cup Y$.  If $u$ has no
neighbor in $X$ or $Y$, then distance at most $2$ from both $x$ and $y$
requires $u\in N_G(w)\cap N_G(z)$.  Now $u,w,z$ form a $3$-cycle, which is
forbidden.  Hence $u$ has a neighbor in $X$ or $Y$, and the two desired trees
exist.

\textbf{Case 4:} {\it $G$ has girth $5$.}
When $G$ has minimum degree $k$, diameter $2$ limits $G$ to $1+k^2$ vertices.
In particular, when $G$ is not $C_{5}$ or the Petersen graph, diameter $2$
requires $\delta(G)\ge 4$.  Let $v,w,x,y,z$ in order be the vertices of a
$5$-cycle $C$ in $G$.  Every vertex outside $V(C)$ has at most one neighbor
on $C$, since $G$ has girth $5$.  Let $V=N_G(v)-V(C)$, and similarly define
$W,X,Y,Z$.  Let $N$ be the set of vertices with no neighbors on $C$.

Since $\delta(G)\ge4$, each of $V,W,X,Y,Z$ has size at least $4$.
Hence we can move chips from two vertices of $X$ to $x$, giving $x$ weight $3$.
Also, diameter $2$ requires every vertex of $Y$ to have a neighbor in $W$ in
order to reach $w$.  Hence there is an edge $w'y'$ with $w'\in W$ and $y'\in Y$.
Move $c_{w'}$ to $w$ and $c_{y'}$ to $y$, giving weight $2$ to $w$ and $y$.

We claim that now all weight in $G$ lies in an ascending tree rooted at $x$,
so $\au(G)=1$.  Since the paths to $x$ along $C$ strictly ascend in weight, the
claim holds immediately for all vertices except those in $N$.  To reach $w$ and
$y$ in two steps, each vertex of $N$ must be adjacent to one vertex in each of
$W$ and $Y$ (exactly one, to avoid $4$-cycles).  However, avoiding $3$-cycles
requires that no vertex of $N$ is adjacent to both $w'$ and $y'$.  Hence each
vertex of $N$ has a neighbor in $W$ or $Y$ with weight $1$.  Since $w$ and $y$
have weight $2$, the claim holds.
\end{proof}

\section*{Acknowledgments}
The authors would like to thank Noah Prince for helpful discussions on this topic.

\end{document}